
\documentclass{article}

\usepackage{microtype}
\usepackage{graphicx}
\usepackage{subfigure}
\usepackage{booktabs} 

\usepackage{hyperref}



\usepackage[accepted]{icml2024}

\usepackage{amsmath}
\usepackage{amssymb}
\usepackage{mathtools}
\usepackage{amsthm}

\usepackage[capitalize,noabbrev]{cleveref}

\theoremstyle{plain}
\newtheorem{theorem}{Theorem}[section]
\newtheorem{proposition}[theorem]{Proposition}

\theoremstyle{definition}

\theoremstyle{remark}
\newtheorem{remark}[theorem]{Remark}

\usepackage[textsize=tiny]{todonotes}

\icmltitlerunning{Gauss-Newton Natural Gradient Descent for Physics-Informed Computational Fluid Dynamics}

\begin{document}

\twocolumn[
\icmltitle{Gauss-Newton Natural Gradient Descent for Physics-Informed \\ Computational Fluid Dynamics}



\icmlsetsymbol{equal}{*}

\begin{icmlauthorlist}
\icmlauthor{Anas Jnini}{yyy}
\icmlauthor{Flavio Vella}{yyy}
\icmlauthor{Marius Zeinhofer}{comp}
\end{icmlauthorlist}

\icmlaffiliation{yyy}{Dept. of Information Engineering and Computer Science,  University of Trento, Italy}
\icmlaffiliation{comp}{Simula Research Laboratory, Oslo, Norway}

\icmlcorrespondingauthor{Anas Jnini}{anas.jnini@unitn.it}
\icmlcorrespondingauthor{Flavio Vella}{flavio.vella@unitn.it}
\icmlcorrespondingauthor{Marius Zeinhofer}{mariusz@simula.no}

\icmlkeywords{Machine Learning, ICML}

\vskip 0.3in
]



\printAffiliationsAndNotice{\icmlEqualContribution} 

\begin{abstract}
We propose Gauss-Newton's method in function space for the solution of the Navier-Stokes equations in the physics-informed neural network (PINN) framework. Upon discretization, this yields a natural gradient method that provably mimics the function space dynamics. Our computational results demonstrate close to single-precision accuracy measured in relative $L^2$ norm on a number of benchmark problems. To the best of our knowledge, this constitutes the first contribution in the PINN literature that solves the Navier-Stokes equations to this degree of accuracy. Finally, we show that given a suitable integral discretization, the proposed optimization algorithm agrees with Gauss-Newton's method in parameter space. This allows a matrix-free formulation enabling efficient scalability to large network sizes.
\end{abstract}

\section{Introduction}
\label{sec:intro}

\paragraph{Physics-Informed Neural Networks (PINNs)} PINNs are a machine learning tool to solve forward and inverse problems involving partial differential equations (PDEs) using a neural network ansatz. They have been proposed as early as \cite{dissanayake1994neural} and were later popularized by the works \cite{raissi2019physics, karniadakis2021physics}. PINNs are a meshfree method designed for the seamless integration of data and physics. Applications include fluid dynamics \cite{cai2021physics}, solid mechanics \cite{haghighat2021physics} and high-dimensional PDEs \cite{hu2023tackling} to name but a few areas of ongoing research.

\begin{figure}[h]
    \centering
    \begin{tikzpicture}
        \node[inner sep=0pt] (r1) at (0,0)
    {\includegraphics[width=\linewidth]{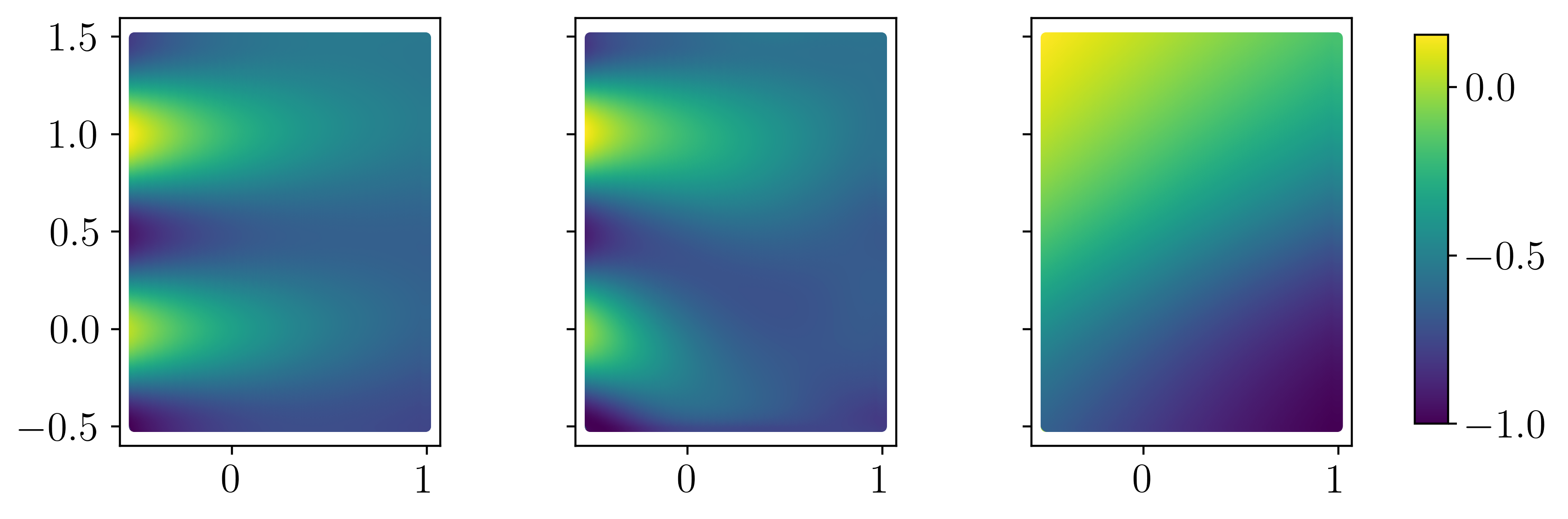}};
        \node[inner sep=0pt] (r1) at (-2.6,1.5)
    {\small 
        \footnotesize $u_\theta-u^\ast$
    };
        \node[inner sep=0pt] (r1) at (-0.3,1.5)
    {
    \footnotesize Gauss-Newton  
    };
        \node[inner sep=0pt] (r1) at (2.1,1.5)
    {\footnotesize Gradient 
    };
    \end{tikzpicture}
    
    \caption{Shown are the error $u_\theta - u^*$ and the push forwards of the Gauss-Newton and Euclidean gradient for the first component of the Kovasznay flow example in Section~\ref{sec:kovasznay}. Note that the error $u_\theta-u^*$ is the optimal update direction and is closely matched by the Gauss-Newton direction. All plots are normed to lie in $[-1,1]$.}\label{fig:kovasznay_pushs}
\end{figure}

\paragraph{PINN Optimization} The optimization of PINNs is well-known to be challenging beyond classical supervised learning that they resemble most \cite{krishnapriyan2021characterizing, wang2021understanding, zeng2022competitive}. They typically suffer from long training times and mediocre accuracy and errors below $10^{-5}$ in relative $L^2$ norm are rarely observed. A few recent studies, such as those by \cite{muller2023achieving, zeng2022competitive, wang2023multi}, have focused on addressing accuracy in physics-informed neural network. However, to our knowledge, there is a lack of reported work in the literature that demonstrates highly accurate PINN solutions for nonlinear PDEs. The aforementioned studies address linear PDEs efficiently. Achieving high accuracy is crucial in many scientific domains.

\paragraph{The Navier-Stokes Equations} The incompressible Navier-Stokes equations are a coupled system of PDEs that describe the motion of a viscous fluid. They are well-known to be a challenging example of nonlinear coupled partial differential equations. Solving Navier-Stokes equations with PINNs has been intensively investigated in the works \cite{jin2021nsfnets, karniadakis2023solution}, see also the review article \cite{cai2021physics}. To the best of our knowledge, our work is the first contribution that produces highly accurate PINN solutions to the Navier-Stokes equations with errors significantly below $10^{-5}$ measured with respect to the relative $L^2$ norm.

\paragraph{A Function-Space View on Neural Network Training} 
Recently, proposed optimization methods that achieve close to single-precision accuracy for PINNs are based on infinite-dimensional optimization algorithms that are discretized in the tangent space of the neural network ansatz. For instance, Energy Natural Gradient Descent (ENGD) proposed in \cite{muller2023achieving} corresponds to a projected Newton method in \emph{function space}. Competitive PINNs (CPINNs) \cite{zeng2022competitive} can be interpreted -- at least for linear PDEs -- as a Lagrange-Newton method in function space\footnote{This viewpoint differs significantly from the presentation of CPINNs in the original paper and is detailed in \cite{to_appear}.}. In this contribution we follow this paradigm and consider both Newton's method (which leads to ENGD) and Gauss-Newton's method in \emph{function space} and discretize them in the tangent space of the neural network ansatz. As function space algorithms, both methods have the potential of quadratic local convergence and are thus highly effective, at least locally.

The discretization of both Newton and Gauss-Newton's method leads to natural gradient-like second-order methods in parameter space and we address resulting scalability issues by a matrix-free approach, see also Section \ref{sec:matrix_free}.

\paragraph{Main Contributions} Our main contributions can be summarized as follows:
\begin{itemize}
    \item [(i)] We propose a second-order optimization method based on Gauss-Newton's method in function space for the training of physics-informed neural networks. Numerically, the proposed method demonstrates unprecedented accuracy of PINN solutions for the Navier-Stokes equations. See Appendix~\ref{sec:concise_description} for a reference description.

    \item [(ii)] We give an infinite-dimensional differential geometric interpretation, explaining the algorithm's optimization dynamics in \emph{function space}. More precisely, we show that the proposed method follows the function space updates up to an orthogonal projection on the model's tangent space.

    \item [(iii)] We demonstrate that -- given appropriate integral discretization in the PINN formulation -- the proposed method corresponds to the well-known Gauss-Newton method in parameter space. This leads to a matrix-free formulation that allows the applicability of the method to large network sizes.
\end{itemize}
In the following, we will refer to the proposed method as Gauss-Newton Natural Gradient (GNNG) stressing its geometrical interpretation. 

\paragraph{Related Work}
Improving the training for PINNs is an active field of research. A line of resarch concentrates on weighting strategies for the different loss terms corresponding to PDE residuals, data terms, and boundary conditions, we refer to \cite{wang2021understanding}. An alternative method to enhance the training process in physics-informed neural networks is to optimize the selection of collocation points for integral discretization in the loss function's formulation. This typically involves adaptive sampling strategies, which recalibrate collocation points according to error indicators like the PDE residual. For an in-depth understanding of these strategies, we recommend \cite{wu2023comprehensive}. While the contributions above improve the training of PINNs significantly, none of papers report relative $L^2$ errors below $10^{-4}$, not even for benign equations. 

Only recently, several works achieved highly accurate PINN solutions with relative $L^2$ errors below $10^{-5}$. In \cite{muller2023achieving} the authors introduce a natural gradient method that emulates Newton's algorithm in function space. This method demonstrates its efficacy by accurately solving a range of linear PDEs and constitutes the main motivation for the present contribution. Furthermore, in \cite{zeng2022competitive}, the authors reformulate the original PINN formulation into a saddle-point problem and apply competitive gradient descent \cite{schafer2019competitive} for its solution. This approach also reports highly accurate solutions for various linear PDEs. A common element in both methodologies is the adoption of an infinite-dimensional perspective. The natural gradient method in \cite{muller2023achieving} is a discretization of Newton's method in function space and the approach of \cite{zeng2022competitive} can be interpreted as a Lagrange-Newton method, see \cite{to_appear} for details. Furthermore in \cite{siegel2023greedy}, a greedy algorithm for the optimization of shallow neural networks is reported to achieve high-accuracy for a number of PDE problems, both for objective functions in the PINN formulation and in variational form. 

To the best of our knowledge, the present work presents the first contribution in the literature that achieves highly accurate PINN solutions with relative $L^2$ errors below $10^{-5}$ for nonlinear PDEs.

\section{Preliminaries}\label{sec:preliminaries}
\subsection{Notation}
By $\mathcal H, \mathcal Q$ we denote abstract real Hilbert spaces and $\mathcal H^*$ denotes the dual space of $\mathcal H$, i.e, the space of linear continuous functionals $\mathcal H \to \mathbb R$. More generally, the space of continuous linear maps from $\mathcal H$ to $\mathcal Q$ will be denoted by $\mathcal L(\mathcal H, \mathcal Q)$. For $u, v\in\mathcal H$, the inner product in $\mathcal H$ will typically be written as $(u,v)_{\mathcal H}$  and for a functional $f\in \mathcal H^*$ we denote the duality pairing by $f(u) = \langle f, u\rangle_{\mathcal H}$. For a linear bounded map $T:\mathcal H \to \mathcal Q$ we denote its Hilbert space adjoint by $T^*:Q\to\mathcal H$ and it is implicitly defined by the equation
\begin{equation*}
    (Tu, v)_{\mathcal Q} = (u, T^*v)_{\mathcal H}, \quad \text{for all }u\in\mathcal H, v\in\mathcal Q.
\end{equation*}
The Fr\'echet derivative of a map $E:\mathcal H \to \mathcal Q$ is denoted by $DE$ and it is a map
\begin{equation*}
    DE: \mathcal H \to \mathcal L(\mathcal H, \mathcal Q), \quad u\mapsto DE(u).
\end{equation*}
For a reference on general functional analytic concepts, we recommend \cite{zeidler2012applied}. Concrete Hilbert spaces used throughout the manuscript are the space of square-integrable functions, denoted by $L^2(\Omega)$ and the Sobolev space of $s$-times weakly differentiable functions which we denote by $H^s(\Omega)$, see also \cite{adams2003sobolev}. In both cases, $\Omega\subset\mathbb R$ is a bounded, open and connected set where $d=2,3$. 

\subsection{Physics-Informed Neural Networks}
Given a domain $\Omega\subset \mathbb R^d$, a time interval $I=[0,T]$, forcing data $f$, viscosity $\nu>0$, boundary data $g$ and initial data $u_0$, the Navier-Stokes equations are given by
\begin{align}
    \begin{split}\label{eq:navier_stokes}
        \partial_tu - \nu\Delta u + (u\cdot\nabla)u + \nabla p &= f \quad \text{in }  \Omega_T
        \\
        \operatorname{div}u &= 0 \quad \text{in }\Omega_T,
        \\
        u &= g \quad \text{on }I\times\partial\Omega,
        \\
        u(0) &= u_0 \ \   \text{in }\Omega,
    \end{split}
\end{align}
where we assumed Dirichlet information on the boundary $\partial\Omega$ for simplicity of presentation and we abbreviated $I\times\Omega$ by $\Omega_T$. We now convert solving equation \eqref{eq:navier_stokes} into a residual minimization problem. More precisely, solving \eqref{eq:navier_stokes} is equivalent to minimizing 
\begin{align}\label{eq:E_navier_stokes}
    \begin{split}
        E(u,p) 
        &=
        \frac12\|\partial_tu -\nu \Delta u + (u\cdot \nabla)u + \nabla p - f \|^2_{L^2(\Omega_T)}
        \\
        &+
        \frac12 \| \operatorname{div}(u) \|^2_{L^2(\Omega_T)}
        +
        \frac12 \| u - g \|^2_{L^2(I\times\partial\Omega)}
        \\
        &+
        \frac12 \| u(0) - u_0 \|^2_{L^2(\Omega)}.
    \end{split}
\end{align}
over a suitable function space. Employing neural network discretizations $u_\theta$ and $p_\psi$ with parameters and parameter spaces $\theta\in\Theta$ and $\psi\in\Psi$ leads to the PINN formulation of the Navier-Stokes equations. We aim to minimize the loss function $L$ defined as 
\begin{equation}\label{eq:loss_navier_stokes_pinn}
    \min_{\theta, \psi} L(\theta, \psi) = E(u_\theta, p_\psi).
\end{equation}
In practice, the integrals appearing in the definition of $L$ are discretized using Monte Carlo integration or numerical quadrature. More general problem classes including for example observational data or time dependency are handled by including the corresponding terms into the loss function $L$.

\paragraph{Hard-Constraints in PINNs}
Solving the Navier-Stokes equations \eqref{eq:navier_stokes} corresponds to solving three equations simultaneously: The Navier-Stokes equation on $\Omega$, the divergence constraint on $\Omega$ and the boundary conditions on $\partial\Omega$. Both in classical methods and in PINNs it is common to embed the boundary and/or the divergence constraint directly into the ansatz via suitably modifying the network structure. We refer to \cite{richter2022neural} for the imposition of divergence constraints and \cite{sukumar2022exact, lu2021physicsinformed, Dong_2021} for the imposition of boundary values. In the experiment Section below, we explicitly state if and which construction is used.

\section{Function Space Optimization}
Our goal is to design effective and accurate optimizers for the minimization of \eqref{eq:loss_navier_stokes_pinn}. We aim to exploit the structure of $E$ as opposed to the structure of $L$ which is highly non-convex due to the neural network discretization.

\subsection{Second-Order Methods}
To this end, given initial parameters $\theta_0\in\Theta$ and $\psi_0\in\Psi$ we are considering second-order methods of the form
\begin{align}\label{eq:generic_second_order}
    \begin{pmatrix}
        \theta_{k+1} \\
        \psi_{k+1}
    \end{pmatrix}
    =
    \begin{pmatrix}
        \theta_k \\
        \psi_k
    \end{pmatrix}
    -
    \eta_k
    G(\theta_k, \psi_k)^\dagger
    \begin{pmatrix}
        \nabla_\theta L(\theta_k, \psi_k)\\
        \nabla_\psi L(\theta_k, \psi_k)
    \end{pmatrix}
\end{align}
where $\eta_k >0$ corresponds to a suitably chosen step-size. The crucial point is the choice of the matrix $G(\theta_k, \psi_k)$.  It is well known that widely employed methods in the PINN community, such as BFGS $(G \approx \nabla^2L)$ do not yield fully satisfactory results, see for example \cite{muller2023achieving, wang2021understanding} and refer to the numerical experiments presented in this work. We explain our proposed choice for $G$ in the following. Alternatively, the reader may consult Appendix~\ref{sec:concise_description} as a reference.

\subsection{Gauss-Newton in Function Space}
In order to derive a candidate for $G$, we observe that minimizing $E$ is a least squares problem in a Hilbert space setting. Abstractly it possesses the structure
\begin{equation*}
    \min_{w\in\mathcal H} E(w) = \frac12 \| R(w) \|^2_{\mathcal Q},
\end{equation*}
for Hilbert spaces $\mathcal H$, $\mathcal Q$ and a nonlinear operator $R:\mathcal H \to \mathcal Q$. Given an initial value $w_0\in\mathcal H$, Gauss-Newton's method in function space is 
\begin{equation}\label{eq:gauss_newton_f_space}
    w_{k+1}
    =
    w_k
    -[DR(w_k)^*DR(w_k)]^{-1}DE(w_k),
\end{equation}
for $k=0,1,\dots$. Here, we $DR(w_k)^*$ is interpreted as a map $\mathcal Q\to \mathcal H^*$, more precisely, we have
\begin{equation*}
    DR(w_k)^*DR(w_k)\delta_w = (DR(w_k)\delta_w, DR(w_k)(\cdot))_Q
\end{equation*}
as an element of $\mathcal H^*.$ Moreover, we assume sufficient smoothness, and the existence of the inverse in the equation above\footnote{These restrictions can be relaxed, as detailed in \cite{deuflhard1979affine}}. The update in \eqref{eq:gauss_newton_f_space} is motivated by linearizing $R$ around the current iterate and explicitly solving the resulting quadratic minimization problem, see for example \cite{deuflhard1979affine}.

Translating the abstract algorithm to the concrete $E$ resulting from Navier-Stokes equations, we first note that 
\[
    \mathcal H 
    =
    H^1(I,L^2(\Omega))\cap L^2(I,H^2(\Omega)) \times L^2(I,H^1(\Omega))
\] 
and 
\[
    \mathcal Q 
    =
    L^2(\Omega_T)\times L^2(\Omega_T)\times L^2(I\times\partial\Omega)\times L^2(\Omega).
\]
The residual
$R(w) = R(u,p)$ is given by
\begin{align}
\begin{split}
R(u,p) =
\begin{pmatrix}
    \partial_tu - \nu\Delta u + (u\cdot \nabla)u + \nabla p - f
    \\
    \operatorname{div}(u)
    \\
     u - g
     \\
     u(0)-u_0
\end{pmatrix}.
\end{split}
\end{align}
The operator $T = DR(u,p)^*DR(u,p):\mathcal H \to \mathcal H^*$ has block structure
\begin{align}\label{eq:T_full_GN_operator}
    T 
    =
    \begin{pmatrix}
        T_1 & T_2 \\
        T_2^* & T_3
    \end{pmatrix}.
\end{align}
The blocks are operators that map as follows. We abbreviate $N(u,\delta_u) = \partial_t\delta_u-\nu\Delta\delta_u + (\delta_u\cdot\nabla)u + (u\cdot\nabla)\delta_u$ and have
\begin{align}
\begin{split}\label{eq:T_1}
    \langle T_1 \delta_u, \bar \delta_u \rangle
    &=
    (N(u,\bar\delta_u), N(u, \delta_u))_{L^2(\Omega_T)}
    \\
    &+
    (\operatorname{div}(\bar\delta_u), \operatorname{div}(\delta_u))_{L^2(\Omega_T)}
    \\
    &+
    (\bar\delta_u, \delta_u)_{L^2(I\times\partial\Omega)}
    \\
    &+
    (\bar\delta_u(0), \delta_u(0))_{L^2(\Omega)}
    .
\end{split}
\end{align}
For $T_2$ and $T_2^*$ we have the formulas
\begin{align}
    \begin{split}\label{eq:T_2_T_2*}
        \langle T_2 \delta_p, \bar\delta_u \rangle
        &=
        (N(u,\bar\delta_u), \nabla \delta_p)_{L^2(\Omega_T)}
        \\
        \langle T_2^* \delta_u, \bar\delta_p \rangle
        &=
        \langle T_2 \bar\delta_p, \delta_u \rangle.
    \end{split}
\end{align}
Finally, $T_3$ is given by
\begin{align}\label{eq:T_3}
    \langle T_3 \delta_p, \bar\delta_p \rangle = (\nabla \bar\delta_p, \nabla \delta_p)_{L^2(\Omega_T)}.
\end{align}

\subsection{Discretization in Tangent Space}
We are now in the position to specify the matrix $G$ in \eqref{eq:generic_second_order} that corresponds to the Gauss-Newton method in function space. To that end, we use the derivatives of the neural network ansatz with respect to the trainable weights, i.e., 
\begin{equation}\label{eq:tangent_funcs}
    \partial_{\theta_1}u_\theta,\dots,\partial_{\theta_{p_\Theta}} \quad \text{and}\quad \partial_{\psi_1}p_\psi,\dots,\partial_{\psi_{p_\Psi}}
\end{equation}
as arguments to $DR(u,p)^*DR(u,p)$. These functions generate the tangent space of the neural network ansatz 
\begin{equation}\label{eq:tangent_space}
    T_{u_\theta, p_\psi}\mathcal M
    =
    \underset{\substack{i=1,\dots p_\Theta \\ j=1,\dots p_\Psi}}{\operatorname{span}}
    \{ (\partial_{\theta_i}u_\theta,0), (0, \partial_{\psi_j}p_\psi)  \}
\end{equation}
of the neural network ansatz
\begin{equation*}
    \mathcal M = \{(u_\theta, p_\psi)\mid \theta\in \Theta, \psi\in\Psi\}
\end{equation*}
at the parameters $\theta$ and $\psi$ which explains the terminology of the discretization. Following this approach, we obtain a matrix
\begin{align}\label{eq:the_G}
    G^{\text{GN}}(\theta,\psi)
    =
    \begin{pmatrix}
        A & B \\
        B^T & C
    \end{pmatrix}
\end{align}
where $A,B$ and $C$ are block matrices that depend on $\theta$ and $\psi$ and are given in terms of the maps $T_1, T_2$ and $T_3$ by the formulas
\begin{align*}
    A_{ij} 
    &=
    \langle T_1 \partial_{\theta_i}u_\theta, \partial_{\theta_j}u_\theta\rangle,
    \\
    B_{ij}
    &=
    \langle T_2 \partial_{\psi_i}p_\psi, \partial_{\theta_j}u_\theta \rangle,
    \\
    C_{ij}
    &=
    \langle T_3 \partial_{\psi_i}p_\psi, \partial_{\psi_j}p_\psi \rangle.
\end{align*}
Note that for all $\theta\in\Theta$ and $\psi\in\Psi$ the matrix $G^{\text{GN}}(\theta,\psi)$ is positive semi-definite. Up to damping, the matrix $G^{\text{GN}}$ will be our preferred choice in a second order method of the form \eqref{eq:generic_second_order}. 

\begin{remark}[Galerkin Discretization]
    Galerkin discretizations -- well known in the numerical analysis literature, see for instance \cite{brenner2008mathematical} -- refer to the discretization of infinite dimensional operators or bilinear forms using finite dimensional subspaces. In our setting, we can interprete $G^{\text{GN}}$ as a Galerkin discretization of the operator $T$ with respect to the tangent space of the neural network ansatz as generated by the functions \eqref{eq:tangent_funcs} and defined in \eqref{eq:tangent_space}.
\end{remark}

\subsection{Geometrical Interpretation}\label{sec:geometrical_interpretation}
Recall that the optimization dynamics for Gauss-Newton's method in function space are given by
\begin{align}\label{eq:f_space_dynamics}
    \begin{pmatrix}
        u_{{k+1}}
        \\
        p_{{k+1}}
    \end{pmatrix}
    =
    \begin{pmatrix}
        u_{{k}}
        \\
        p_{{k}}
    \end{pmatrix}
    -
    \eta_k
    \begin{pmatrix}
        d^u_k
        \\
        d^p_k
    \end{pmatrix}
\end{align}
where $\eta_k>0$ is a suitable step-size and the additive update direction given via
\begin{align*}
    \begin{pmatrix}
        d^u_k
        \\
        d^p_k
    \end{pmatrix}
    =
    [DR(u_k, p_k)^*DR(u_k, p_k)]^{-1}DE(u_k, p_k),
\end{align*}
we refer also to equation \eqref{eq:gauss_newton_f_space}. We can also interprete these dynamics as a natural gradient method induced by a Riemannian metric
\begin{align}\label{eq:Riemannian_metric_g}
    g(u_k,p_k)((u,p),(v,q)) 
\end{align}
which for $(u,p),(v,q)\in\mathcal H$ is given by the formula
\begin{align}\label{eq:Riemannian_metric_g_formula}
    (DR(u_k,p_k)(u,p), DR(u_k, p_k)(v,q))_{\mathcal Q}.
\end{align}
This means that $g$ is a Riemannian metric on the function space $\mathcal H$ and the neural network ansatz class $\mathcal M$. The iteration \eqref{eq:generic_second_order} using the matrix \eqref{eq:the_G} is thus a natural gradient descent method with the geometry induced by the Riemannian metric $g$ defined above.

\begin{theorem}[Interpretation of Update Direction]\label{thm:update_direction}
    Assume that we employ algorithm \eqref{eq:generic_second_order} using the matrix \eqref{eq:the_G} producing a sequence of neural networks $(u_{\theta_k})$ and $(p_{\psi_k})$. Then it holds
    \begin{align}\label{eq:FS_dynamics_discretized_algorithm}
        \begin{pmatrix}
            u_{\theta_{k+1}} \\
            p_{\psi_{k+1}}
        \end{pmatrix}
        =  
        \begin{pmatrix}
            u_{\theta_{k}} \\
            p_{\psi_{k}}
        \end{pmatrix} 
        -
        \eta_k 
        \Pi_k
        \begin{pmatrix}
            d_{\theta_{k}} \\
            d_{\psi_{k}}
        \end{pmatrix} 
        +
        \epsilon_k,
    \end{align}
    where $\Pi_{k}$ denotes the orthogonal projection onto the tangent space \eqref{eq:tangent_space} with respect to the inner product $g(u_k,p_k)$. The term $\epsilon_k$ corresponds to an error vanishing quadratically in the step and step size length 
    \begin{equation*}
        \epsilon_k 
        =
        \mathcal O(\eta_k^2\lVert G(\theta_k,\psi_k)^\dagger \nabla L(\theta_k, \psi_k) \rVert^2).
    \end{equation*}
\end{theorem}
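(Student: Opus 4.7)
The plan is to expand the neural-network outputs at the new parameters to first order around the old ones, then recognize the coefficient vector of the resulting linear combination of tangent functions as exactly the solution of the normal equations that define the orthogonal projection onto $T_{u_{\theta_k},p_{\psi_k}}\mathcal M$ with respect to the metric $g(u_k,p_k)$.

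Concretely, I first Taylor-expand in parameter space. Writing $\Delta\theta_k = \theta_{k+1}-\theta_k = -\eta_k (G^{\text{GN}}(\theta_k,\psi_k))^\dagger \nabla_\theta L$ and similarly $\Delta\psi_k$, smoothness of the ansatz gives
\begin{align*}
u_{\theta_{k+1}} &= u_{\theta_k} + \sum_{i=1}^{p_\Theta}(\Delta\theta_k)_i\,\partial_{\theta_i}u_{\theta_k} + O(\lVert\Delta\theta_k\rVert^2),\\
p_{\psi_{k+1}} &= p_{\psi_k} + \sum_{j=1}^{p_\Psi}(\Delta\psi_k)_j\,\partial_{\psi_j}p_{\psi_k} + O(\lVert\Delta\psi_k\rVert^2).
\end{align*}
The quadratic remainders collect into the $\epsilon_k$ term with the stated bound. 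It remains to show that the linear part equals $-\eta_k\,\Pi_k(d_{\theta_k},d_{\psi_k})$, where I am abusing notation and writing $(d_{\theta_k},d_{\psi_k})$ for the function space Gauss-Newton direction $(d^u_k,d^p_k)$.

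The core computation is to identify $\Pi_k$ algebraically. The orthogonal projection of $(d^u_k,d^p_k)$ onto the tangent space \eqref{eq:tangent_space} under the inner product $g(u_k,p_k)$ is the linear combination $\sum_i a_i(\partial_{\theta_i}u_{\theta_k},0)+\sum_j b_j(0,\partial_{\psi_j}p_{\psi_k})$ whose coefficients $(a,b)$ solve the normal equations obtained by taking $g$-inner products against each basis vector. Using \eqref{eq:Riemannian_metric_g_formula} together with the definitions \eqref{eq:T_1}--\eqref{eq:T_3}, the Gram matrix of the basis vectors under $g(u_k,p_k)$ is exactly $G^{\text{GN}}(\theta_k,\psi_k)$ from \eqref{eq:the_G}. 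On the right-hand side, I use that $(d^u_k,d^p_k)$ satisfies $DR^*DR(d^u_k,d^p_k)=DE(u_k,p_k)$, so
\begin{equation*}
g(u_k,p_k)\bigl((d^u_k,d^p_k),(\partial_{\theta_i}u_{\theta_k},0)\bigr)=\langle DE(u_k,p_k),(\partial_{\theta_i}u_{\theta_k},0)\rangle=\partial_{\theta_i}L(\theta_k,\psi_k),
\end{equation*}
by the chain rule, and analogously in the $\psi$-block. Hence the projection coefficients solve $G^{\text{GN}}(\theta_k,\psi_k)(a,b)^T=\nabla L(\theta_k,\psi_k)$, which means $(a,b)^T=G^{\text{GN}}(\theta_k,\psi_k)^\dagger\nabla L(\theta_k,\psi_k)$ and matches the parameter update up to the factor $-\eta_k$.

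The main technical obstacle is the appearance of the Moore--Penrose pseudo-inverse: when $G^{\text{GN}}$ is singular, the tangent basis is $g$-linearly dependent, so the normal equations do not have a unique solution in the coefficient space. I would handle this by working in the quotient by the kernel of the Gram matrix, noting that the push-forward $\sum_i a_i\partial_{\theta_i}u_{\theta_k}+\sum_j b_j\partial_{\psi_j}p_{\psi_k}$ depends only on the equivalence class of $(a,b)$ modulo $\ker G^{\text{GN}}$; since $\nabla L(\theta_k,\psi_k)$ always lies in the range of $G^{\text{GN}}$ (its $i$-th entry is the pairing of $DE$ with a tangent vector), the pseudo-inverse recovers a well-defined representative, and the projection is unambiguously defined. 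The error estimate on $\epsilon_k$ is a direct consequence of $C^2$-smoothness of $\theta\mapsto u_\theta$ and $\psi\mapsto p_\psi$ on bounded sets together with the explicit size of $\Delta\theta_k,\Delta\psi_k$ in terms of $\eta_k\lVert G^{\text{GN},\dagger}\nabla L\rVert$.
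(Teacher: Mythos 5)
Your reconstruction of the core projection argument is essentially the proof the paper outsources to its reference: Taylor-expand the ansatz in the parameters, identify the Gram matrix of the tangent functions under $g(u_k,p_k)$ with $G^{\text{GN}}(\theta_k,\psi_k)$, and use $DR^*DR\,(d^u_k,d^p_k)=DE(u_k,p_k)$ together with the chain rule to see that the right-hand side of the normal equations is $\nabla L(\theta_k,\psi_k)$. Those computations are correct, and your treatment of the pseudo-inverse (the push-forward depends only on the coefficient vector modulo $\ker G^{\text{GN}}$, and $\nabla L$ lies in the range because $c^T\nabla L = g\bigl((d^u_k,d^p_k),\sum_i c_i v_i\bigr)=0$ for $c\in\ker G^{\text{GN}}$ by Cauchy--Schwarz) is sound.

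However, you have skipped precisely the step that constitutes the paper's own contribution to the proof: verifying that $g(u_k,p_k)$ is \emph{positive definite} on $\mathcal H$, not merely positive semi-definite. This is not a formality. If $g$ were degenerate, the phrase ``orthogonal projection onto the tangent space with respect to $g(u_k,p_k)$'' would be ill-posed (the best approximation would only be determined up to $g$-null tangent directions), and, more fundamentally, the function-space direction $(d^u_k,d^p_k)$ that you take as given would not exist, since $DR^*DR$ would fail to be invertible. Your pseudo-inverse discussion handles degeneracy of the finite-dimensional Gram matrix $G^{\text{GN}}$, which is a different issue from degeneracy of $g$ itself. For the linearized Navier--Stokes residual this definiteness is a nontrivial PDE fact: one needs a coercivity estimate of the form
\begin{equation*}
    \| u \|^2_{H^{2,1}(\Omega_T)} + \|\nabla p\|^2_{L^2(\Omega_T)}
    \lesssim
    \|\partial_tu - \nu\Delta u + (u_k\cdot \nabla)u + (u\cdot\nabla)u_k + \nabla p \|^2_{L^2(\Omega_T)},
\end{equation*}
which the paper obtains from maximal parabolic regularity for the linearized equations (Proposition 2.1 in the cited reference of Hinze), recovering the pressure by enlarging the space of test functions beyond the divergence-free ones. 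Without this estimate, or some substitute for it, your argument establishes the identity only under an unverified hypothesis.
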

\begin{proof}
    The proof of can be found in \cite{to_appear} as Theorem 1. However, we need to verify that the Riemannian metric $g$ is positive definite and not merely positive semi-definite. This requires carefully examining regularity properties of the linearized Navier-Stokes equations. The analysis is provided in Appendix \ref{sec:appendix_math}.
\end{proof}
\begin{remark}
    We draw the reader's attention to the close similarity between the function space dynamics \eqref{eq:f_space_dynamics} and the updates of the discretized algorithms, see \eqref{eq:FS_dynamics_discretized_algorithm}.
    This ensures that the high quality\footnote{We refer to the quadratic convergence of the Gauss-Newton method in function space.} direction $(d^u_k, d^p_k)$ is followed as closely as possible.
\end{remark}

\subsection{Newton's Method in Function Space}
We consider Newton's method in function space for minimizing $E$ as given in equation \eqref{eq:E_navier_stokes}. For initial values $(u_0,p_0))$, this leads to the iteration
\begin{align*}
    \begin{pmatrix}
        u_{k+1} \\
        p_{k+1}
    \end{pmatrix}
    =
    \begin{pmatrix}
        u_k \\
        p_k
    \end{pmatrix}
    -
    D^2E(u_k,p_k)^{-1}
    \begin{pmatrix}
        D_uE(u_k,p_k) \\
        D_pE(u_k, p_k)
    \end{pmatrix}
\end{align*}
for $k=0,1,2,\dots$. The second derivative $D^2E(u,p)$ at given functions $u\in H^2(\Omega)$ and $p\in H^1(\Omega)$ differs from $T$ defined in equation \eqref{eq:T_full_GN_operator} only in the first block. More precisely, it is given by
\begin{align*}
    D^2E(u,p)
    =
    \begin{pmatrix}
        T_1 + H & T_2 \\
        T_2^* & T_3
    \end{pmatrix}
\end{align*}
where $T_1, T_2$ and $T_3$ are the operators defined in \eqref{eq:T_1}, \eqref{eq:T_2_T_2*} and \eqref{eq:T_3}. Moreover, $\langle H \delta_u, \bar\delta_u \rangle$ is given by
\begin{align*}
    (-\Delta u + (u\cdot \nabla)u + \nabla p -f, (\delta_u\cdot\nabla) \bar\delta_u + (\bar\delta_u\cdot\nabla)\delta_u))_{\Omega}.
\end{align*}
Discretizing this algorithm leads to the recently proposed energy natural gradient descent \cite{muller2023achieving}. This approach discretizes $D^2E(u,p)$ in the same way as $DR(u,p)^*DR(u,p)$ and yields a matrix $G^{\text{ENGD}}$ which can be used in a second order method of the form \eqref{eq:generic_second_order}. However, in our numerical experiments we observe that this choice leads to a less robust optimization, see also Section~\ref{sec:visual}. 

\begin{remark}
    Interpreting the update direction resulting from the choice $G^{\text{ENGD}}$ is less clear as in the case of $G^{\text{GN}}$. This is due to the possibility of $G^{\text{ENGD}}$ having both positive and negative eigenvalues. In this case, the connection to a natural gradient method is lost.
\end{remark}

\subsection{Connection to Gauss-Newton in Parameter Space}\label{sec:gauss_newton_connection}
We show that Gauss-Newton in parameter space and function space coincide, given a suitable integral discretization. For simplicity we consider the stationary Navier-Stokes equations in this Section, although the results generalize readily. For quadrature points $(x_i)_{i=1,\dots,N}$ in $\Omega$ and $(x^b_i)_{i=1,\dots,N_{\partial\Omega}}$ on $\partial\Omega$ we define the discrete residual $r:(\theta,\psi)\to\mathbb R^{2N_\Omega + N_{\partial\Omega}}$ to be
\begin{align*}
    \begin{pmatrix}
        \frac{1}{\sqrt N}(-\nu\Delta u_\theta + (u_\theta\cdot\nabla)u_\theta + \nabla p_\psi - f)(x_1) \\
        \vdots \\
        \frac{1}{\sqrt N}(-\nu\Delta u_\theta + (u_\theta\cdot\nabla)u_\theta + \nabla p_\psi - f)(x_N) \\
        \frac{1}{\sqrt N}\operatorname{div}(u_\theta)(x_1) \\
        \vdots \\
        \frac{1}{\sqrt N}\operatorname{div}(u_\theta)(x_N) \\
        \frac{1}{\sqrt N_{\partial\Omega}}u_\theta(x_1^b) \\
        \vdots \\
        \frac{1}{\sqrt N_{\partial\Omega}}u_\theta(x_{N_{\partial\Omega}}^b)
    \end{pmatrix}.
\end{align*}
The discretized PINN formulation of \eqref{eq:navier_stokes} reads
\begin{equation}\label{eq:navier_stokes_pinn_discrete}
    \min L(\theta,\psi) = \frac12\|r(\theta,\psi)\|^2_{l^2}.
\end{equation}
It is straight-forward to see that it holds
\begin{equation}\label{eq:Jacobian_transpos_Jacobian}
    G(\theta, \psi)
    =
    J(\theta, \psi)^T \cdot J(\theta, \psi),
\end{equation}
where $J(\theta, \psi)$ denotes the Jacobian of $r$ at $(\theta, \psi)$. In other words, applying Gau\ss-Newton's method to \eqref{eq:navier_stokes_pinn_discrete} agrees with the discretization of the function space algorithm -- if in the discretization of $G$ the same quadrature points are being used.

\subsection{Matrix-Free Formulation}\label{sec:matrix_free}
Formula \eqref{eq:Jacobian_transpos_Jacobian} allows to compute the application of the Gramian $G(\theta, \psi)$ on a vector $v$ in a matrix-free way. This is done using a combination of forward \& backward mode automatic differentiation and requires only constant overhead over a gradient computation of \eqref{eq:navier_stokes_pinn_discrete}, compare also to the discussion in \cite{schraudolph2002fast}. In fact, we compute 
\begin{equation*}
    G(\theta, \psi)v = J^Tw, \quad \text{where } w = Jv.
\end{equation*}
Having access to Gramian-vector products, we can resort to matrix-free solvers for the solution of $G(\theta, \psi)^\dagger\nabla L(\theta,\psi)$. We use the conjugate gradient method, see for instance \cite{trefethen2022numerical}.

\section{Experiments}
\label{sec:Experiments}
We evaluate the GNNG method on three benchmark incompressible Navier-Stokes flows that admit analytical solutions: the two-dimensional steady Kovasznay flow, the two-dimensional unsteady Taylor-Green vortex with periodic boundary conditions, and the three-dimensional unsteady Beltrami flow. We showcase the use of the matrix-free approach on a large neural-network in Appendix~\ref{sec:MFTG}, along with additional resources for the main experiments in Appendix~\ref{sec:Additional}.
\paragraph{Description of the method}
For all our numerical experiments, we realize a GNNG step with a
line search on a logarithmic grid as described in the work by \cite{muller2023achieving}. Depending on the size of the neural-network, we either use a direct solver or a matrix-free approach as described in Section~\ref{sec:matrix_free}.

\paragraph{Evaluation}
We report relative $L^2$ errors for the velocity and pressures which we compute with one order of magnitude more quadrature points to guarantee a faithful representation of the errors.
For brevity, we define the mean component-wise relative \( L^2 \) error. This is computed for each simulation as \( E_{m} = \frac{1}{n} \sum_{i=1}^{n} e_{i} \), where \( e_{i} \) represents the relative \( L^2 \) error for the \( i \)-th component, with \( n \) being the total number of components under consideration. For example, in a system with components \( u \), \( v \), and \( p \), we have \( n = 3 \) and \( e_{i} \in \{ e_{u}, e_{v}, e_{p} \} \). We also report the training loss in Appendix~\ref{sec:Additional} as a measure of the efficiency of the optimizer. We test the performance of GNNG against Adam \cite{kingma2017adam} and the quasi-Newton method BFGS \cite{nocedal1999numerical}. For the optimization with Adam we employ and exponentially decreasing learning rate schedule, starting with an initial learning rate of $10^{-3}$ and decreasing after $1.5 \times 10^{4}$ steps by a factor of $10^{-1}$ every $10^{4}$ steps.

\paragraph{Computation details} 
We employ JAX \cite{jax2018github}, Jaxopt \cite{jaxopt_implicit_diff} and Optax \cite{deepmind2020jax} for our implementation and the optimizers Adam and BFGS, respectively. Direct linear solves of $G(\theta, \psi)^\dagger\nabla L(\theta,\psi)$ are handled via \cite{lineax2023}. All the experiments utilized an Nvidia A100 80GB Graphics Processing Unit(GPU) using double precision which is crucial for achieving high accuracy. Once the manuscript is accepted for publication, the code to reproduce the experiments will be made available as a public Github repository. The first three experiments of this Section use a direct method for computing the solution of $G(\theta, \psi)^\dagger\nabla L(\theta,\psi)$. The experiment described in Section~\ref{sec:MFTG} utilizes the Conjugate Gradient Solver from the Jax Scipy module \cite{jax2018github} to implement the matrix-free formulation detailed in Section~\ref{sec:matrix_free}. We refer to Table ~\ref{tab:optimization_settings} for the optimization settings for each solver for the first three experiments.

\begin{table}[h]
\centering
\begin{tabular}{|l|c|r|}
\hline
\textbf{Method} & \textbf{Number of Iterations}  \\
\hline
GNNG & 5000  \\
BFGS & 5000  \\
Adam & 200000 \\
\hline
\end{tabular}
\caption{Optimization settings for the different solvers used in the first three experiments.}
\label{tab:optimization_settings}
\end{table}

\subsection{Kovasznay Flow}\label{sec:kovasznay}

We consider the two-dimensional steady Navier-Stokes flow as originally described by \cite{kovasznay_1948} with a Reynolds number \(Re=40\). The flow is defined over the computational domain \(\Omega = [-0.5, 1.0] \times [-0.5, 1.5]\). The analytical solutions are given by: 
\begin{align*}
    \begin{split}
    u^*(x, y) &= 1 - e^{\lambda x} \cos(2\pi y), \\
    v^*(x, y) &= \frac{\lambda}{2\pi} e^{\lambda x} \sin(2\pi y), \\
    p^*(x, y) &= \frac{1}{2} (1 - e^{2\lambda x}),
    \end{split}
\end{align*}
where \( \lambda = \frac{1}{2\nu} - \sqrt{\frac{1}{4\nu^2} + 4\pi^2} \), and \( \nu = \frac{1}{\text{Re}} = \frac{1}{40} \).

\begin{figure}[H]
\centering
\includegraphics[width=0.48\textwidth]{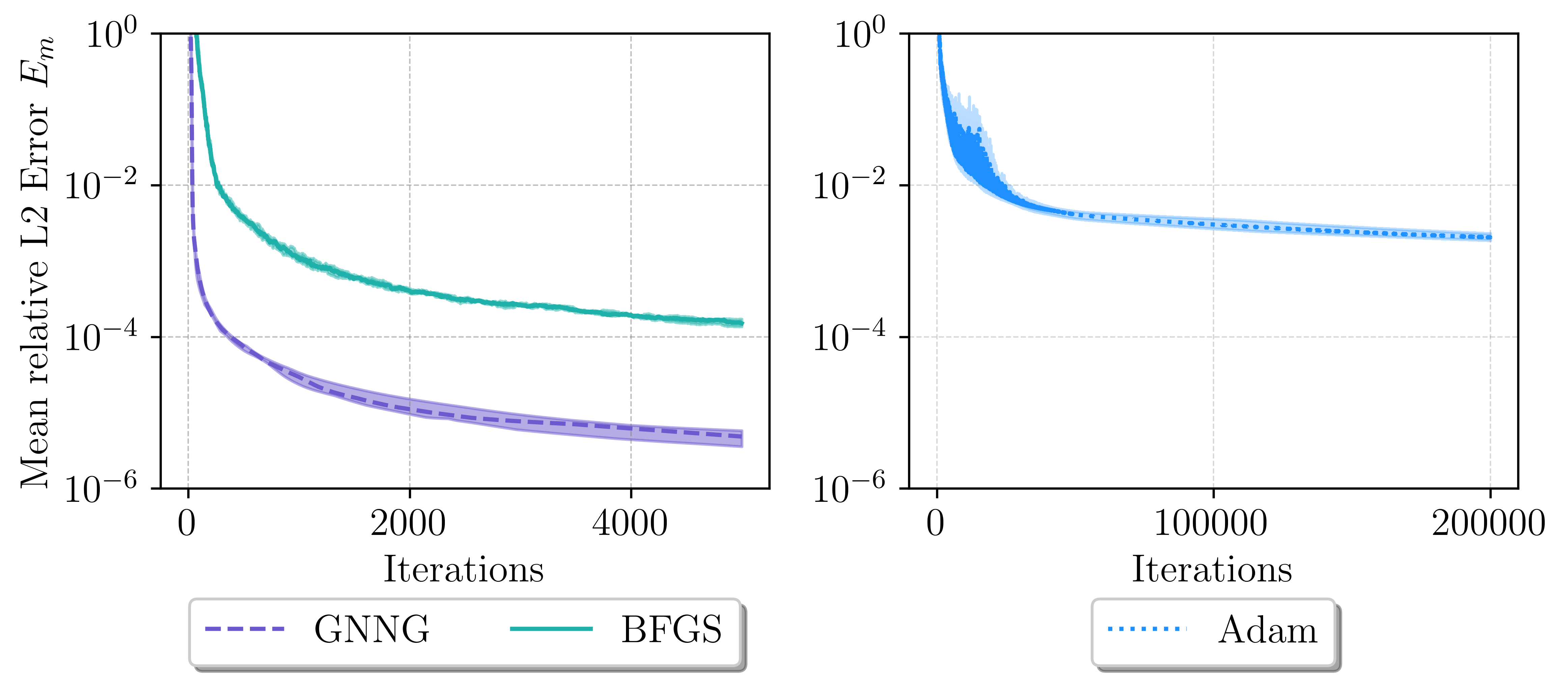}
\caption{Median mean relative \(L^2\) errors \( E_{m} \) during training for the Kovasznay flow. Statistics are computed over 10 different initializations with the shaded area displaying the region between the first and third quartile.}
\label{fig:convergence_comparison_median}
\end{figure}

We select 2601 equidistantly spaced collocation points in the interior of \(\Omega\) and 400 collocation points on the boundary and 26010  equidistantly spaced points for validation. The boundary constraints are imposed in a soft manner and are penalized in the loss function. We employ an architecture with 4 layers of width 50 and conduct the experiments over 10 different Glorot uniform random initializations \cite{Glorot2010UnderstandingTD}.

\begin{table}[h]
    \centering
    \begin{tabular}{|l|c|c|c|}
    \hline
    \textbf{Solver} & \textbf{Min \(E_{m}\) } & \textbf{Max \(E_{m}\) } & \textbf{Median \(E_{m}\) } \\
    \hline
    GNNG & 2.2339e-06 & 7.1622e-06 & 4.8411e-06 \\
    BFGS & 1.0175e-04 & 1.8478e-04 & 1.5057e-04 \\
    Adam & 1.6284e-03 & 2.7729e-03 & 2.0492e-03 \\
    \hline
    \end{tabular}
    \caption{Statistical summary of the mean component-wise relative \(L^2\) errors (\(E_{m}\)) in the Kovasznay Flow experiment. This table presents the minimum, maximum, and median values of \(E_{m}\) across 10 initializations for each solver.}
    \label{tab:max_error}
\end{table}

\begin{table}[h]
    \centering
    \begin{tabular}{|l|c|c|c|c|}
    \hline
    \textbf{Solver} & \textbf{\(u\)} & \textbf{\(v\)} & \textbf{\(p\)}  \\
    \hline
    GNNG & 1.5086e-06 & 4.4716e-06 & 7.2157e-07  \\
    BFGS & 1.7129e-05 & 2.6175e-04 & 2.6375e-05  \\
    Adam & 3.0956e-04 & 4.2589e-03 & 3.1659e-04  \\
    \hline
    \end{tabular}
    \caption{Component-wise relative \(L^2\) errors for \(u\), \(v\), and \(p\) in the Kovasznay flow experiment for the seed with the lowest overall \(E_{m}\) for each solver.}
    \label{tab:best_l2_error}
\end{table}

As reported in Tables \ref{tab:max_error} and \ref{tab:best_l2_error}, and illustrated in Figure~\ref{fig:convergence_comparison_median}, we observe that the first-order optimizer Adam reaches an accuracy plateau at about \(10^{-3}\), despite being allowed a substantially greater number of iterations than the second order optimizers. In contrast, the quasi-Newton method BFGS, although it outperforms Adam, does not reach the close to single-precision accuracy of GNNG, which improves upon BFGS by two orders of magnitude. Comparing to results from the literature confirms the capabilities of GNNG, which improves upon the results for the Kovasznay flow reported in \cite{xiang2021selfadaptive} by up to two orders of magnitude.
To illustrate the geometric interpretation of the optimization dynamics discussed in Section~\ref{sec:geometrical_interpretation}, we visualize the update directions in Figure \ref{fig:kovasznay_pushs} and compare them to the optimal update direction $u_\theta - u^*$. We observe that GNNG yields an excellent visual agreement with $u_\theta - u^*$ unlike the other optimizers. For an extended discussion we refer to Appendix~\ref{sec:visual}. Furthermore, we experimented with ENGD as proposed in \cite{muller2023achieving}. We were however not able to achieve any convergence when used as a stand-alone method. When utilized at a later stage in the optimization process using a different solver first, ENGD yielded comparable results to GNNG. We investigated the situation closer in Appendix~\ref{sec:visual}, visualizing the update directions of ENGD and GNNG. 

\subsection{Beltrami Flow}\label{sec:beltrami}
We consider the unsteady three-dimensional Beltrami flow as originally described by \cite{Ethier1994ExactF3} with a Reynolds number \(\textit{Re}=1\). The flow is defined over the computational domain \(\Omega = [-1, 1] \times [-1, 1] \times [-1, 1]\) and within a time interval of \([0, 1]\). The analytical solutions  are: 
\begin{align*}
    u(x, y, z, t) &= -e^{x} \sin(y + z) + e^{z} \cos(x + y) e^{-t}, \\
        v(x, y, z, t) &= -e^{y} \sin(z + x) + e^{x} \cos(y + z) e^{-t}, \\
        w(x, y, z, t) &= -e^{z} \sin(x + y) + e^{y} \cos(z + x) e^{-t}, \\
\end{align*}
and
\begin{align*}
    p(x, y, z, t) &= -\frac{1}{2} \left[e^{2x} + e^{2y} + e^{2z} \right. \\
        &\quad \left. + 2 \sin(x + y) \cos(z + x)e^{y+z} \right. \\
        &\quad \left. + 2 \sin(y + z) \cos(x + y)e^{z+x} \right. \\
        &\quad \left. + 2 \sin(z + x) \cos(y + z)e^{x+y}\right] e^{-2t}.\\
\end{align*}

\begin{figure}[h]
    \centering
    \includegraphics[width=0.48\textwidth]{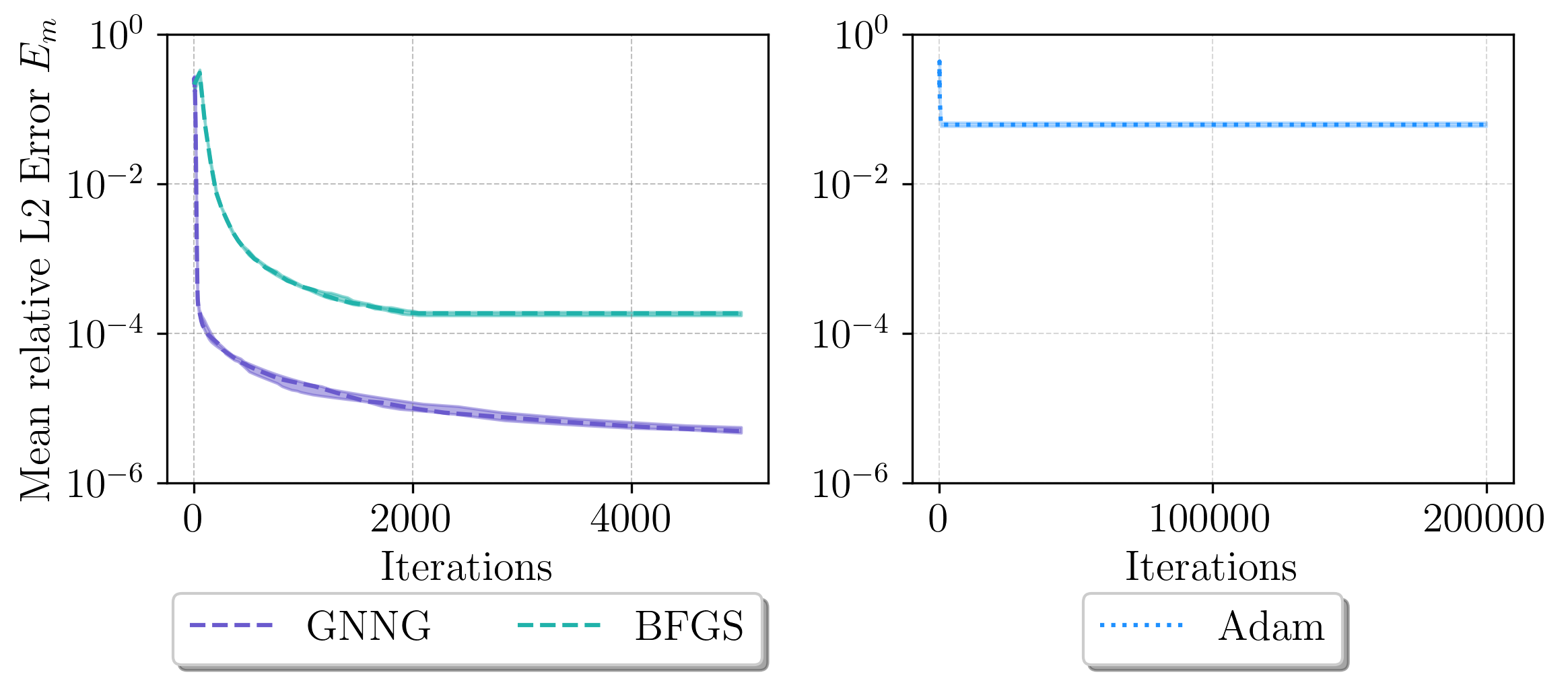}
    \caption{ 
    Median mean relative \(L^2\) errors \( E_{m} \) during training for the Beltrami flow. Statistics are computed over 10 different initializations with the shaded area displaying the region between the first and third quartile.}
    \label{fig:convergence_comparison_medianb}
\end{figure}

\begin{table}[h]
    \centering
    \resizebox{\columnwidth}{!}{
    \begin{tabular}{|l|c|c|c|}
    \hline
    \textbf{Solver} & \textbf{Min \(E_{m}\) } & \textbf{Max \(E_{m}\) } & \textbf{Median \(E_{m}\) } \\
    \hline
    GNNG & 3.4371e-06 & 6.9763e-06 & 4.8992e-06 \\
    BFGS & 2.7475e-04 & 3.6234e-04 & 3.1912e-04 \\
    Adam & 3.9956e-02 & 5.3525e-02 & 4.9906e-02 \\
    \hline
    \end{tabular}}
    \caption{Statistical summary of mean component-wise relative \(L^2\) errors (\(E_{m}\)) in the Beltrami flow experiment. This table presents the minimum, maximum, and median values of \(E_{m}\) across 10 initializations for each solver.}
    \label{tab:max_error_beltrami}
\end{table}

\begin{table}[h]
    \centering
    \resizebox{\columnwidth}{!}{
    \begin{tabular}{|l|c|c|c|c|}
    \hline
    \textbf{Solver} & \textbf{\(u\)} & \textbf{\(v\)} & \textbf{\(w\)} & \textbf{\(p\)}\\
    \hline
    GNNG &  3.4661e-06 & 4.2510e-06 & 4.4721e-06& 1.5591e-06\\
    BFGS & 2.5255e-04 & 3.5580e-04 & 3.7430e-04& 1.1634e-04\\
    Adam & 4.1744e-02 & 5.2890e-02 & 5.5641e-02& 9.5505e-03\\
    \hline
    \end{tabular}
    }
    \caption{Component-wise relative \(L^2\) errors for \(u\), \(v\), \(w\), and \(p\) in the Beltrami flow experiment for the seed with the lowest overall \(E_{m}\) for each solver at \( t = 1 \).}
    \label{tab:best_l2_error_beltrami}
\end{table}

For the training 31 \( \times \) 31 points on each face are used for boundary and initial conditions, while a batch of 10,000 points in the spatio-temporal domain is drawn for the interior collocation points. For this experiment, we present the relative \(L^2\) errors achieved at the final timestep \( t = 1 \). We again report statistics for 10 random parameter initializations.
As observed in Tables~\ref{tab:max_error_beltrami} and \ref{tab:best_l2_error_beltrami}, and illustrated in Figure~\ref{fig:convergence_comparison_medianb}, Adam struggles
and quickly plateaus at an error of \(10^{-2}\) for the final time \( t = 1 \), which agrees with the results reported in \cite{jin2021nsfnets}. 
In \cite{wang2022respecting}, the authors attribute these high errors for unsteady problems to an inherent bias in the PINN formulation as a space-time method. They argue that information from the initial conditions needs to be propagated to the later times and propose a curriculum learning strategy to respect the temporal causality. Note that BFGS mitigates this phenomenon and GNNG is not affected by it at all, reaching a relative error as low as \(10^{-6}\) at the final time. The efficiency of GNNG stems from its interpretation as a natural gradient method. The metric $g$ defined in Section~\ref{sec:geometrical_interpretation} takes into account the whole time horizon at once and thus respects temporal causality.

\subsection{Taylor-Green Vortex}\label{sec:Taylor_Green}
The Taylor-Green Vortex, as originally analyzed in \cite{1937RSPSA.158..499T}, is considered within the computational domain \(\Omega = [0, 2\pi] \times [0, 2\pi]\) and the time interval \([0, 10]\). The velocities and pressure are given by
\begin{align*}
    \begin{split}
        u(x, y, t) &= \sin(x) \cos(y) F(t), \\
        v(x, y, t) &= -\cos(x) \sin(y) F(t), \\
        p(x, y, t) &= \frac{1}{4} (\cos(2x) + \cos(2y)) F^2(t),
    \end{split}
\end{align*}
where \( F(t) = e^{-2\nu t} \) and \( \nu = \frac{1}{\text{Re}} = \frac{1}{500} \), assuming the fluid density \( \rho = 1 \).

For the training data, a batch of 8,000 points in the spatio-temporal domain is used for the interior collocation points. Again, we present the relative \(L^2\) errors achieved at the final time \( t = 10 \) and conduct the experiments over 10 different initializations performed according to the Glorot uniform scheme. While maintaining the architecture of 4 layers of width 50, we implement several architectural transformations to the neural network for this experiment:
\begin{itemize}
    \item \textbf{Divergence-Free Condition:} We use the ansatz described in \cite{richter2022neural} to generate a model \( N(t,x; \theta_u) \) that has divergence-free output.

    \item \textbf{Exact Imposition of Initial Conditions:} In accordance with the approach described in \cite{lu2021physicsinformed}, we transform the output so that it exactly imposes initial conditions. The neural network output is modified as \( \hat{u}(t,x; \theta_u) = g(x) + \ell(t)N(t,x; \theta_u) \), where \( N(t,x; \theta_u) \) is the network output, \( \ell(t) \) is zero at the initial condition \( t = 0 \) and \( g(x) \) is the initial condition to be imposed. This does not affect the divergence-free condition, as the initial condition \( g(x) \) is divergence-free and \( \ell(t) \) is independent of spatial variables.

    \item \textbf{Periodic Boundary Conditions:} Following the approach in \cite{Dong_2021}, we replace the input in each spatial dimension \( x_j \) by two terms of the basis functions of the Fourier series : \( \cos\left(2\pi x_j/P\right) \) and \( \sin\left(2\pi x_j/P\right) \) . This effectively enforces periodicity in the model.
\end{itemize}

These adaptions reduce the loss function to the momentum equations. As reported in Table~\ref{tab:TGmax_error} and Table~\ref{tab:TGbest_l2_error}, and illustrated in the Figure~\ref{fig:convergence_comparison_mediant}, we observe that using hard-imposed initial and divergence conditions greatly improves the accuracy. Adam reaches an accuracy in the order of \(10^{-4}\) and BFGS reaches an accuracy in the order of \(10^{-5}\). GNNG outperforms both Adam and BFGS, by three and two orders of magnitudes respectively, reaching relative \(L^2\) errors as low as \(10^{-7}\).

\begin{figure}[H]
\centering
\includegraphics[width=0.48\textwidth]{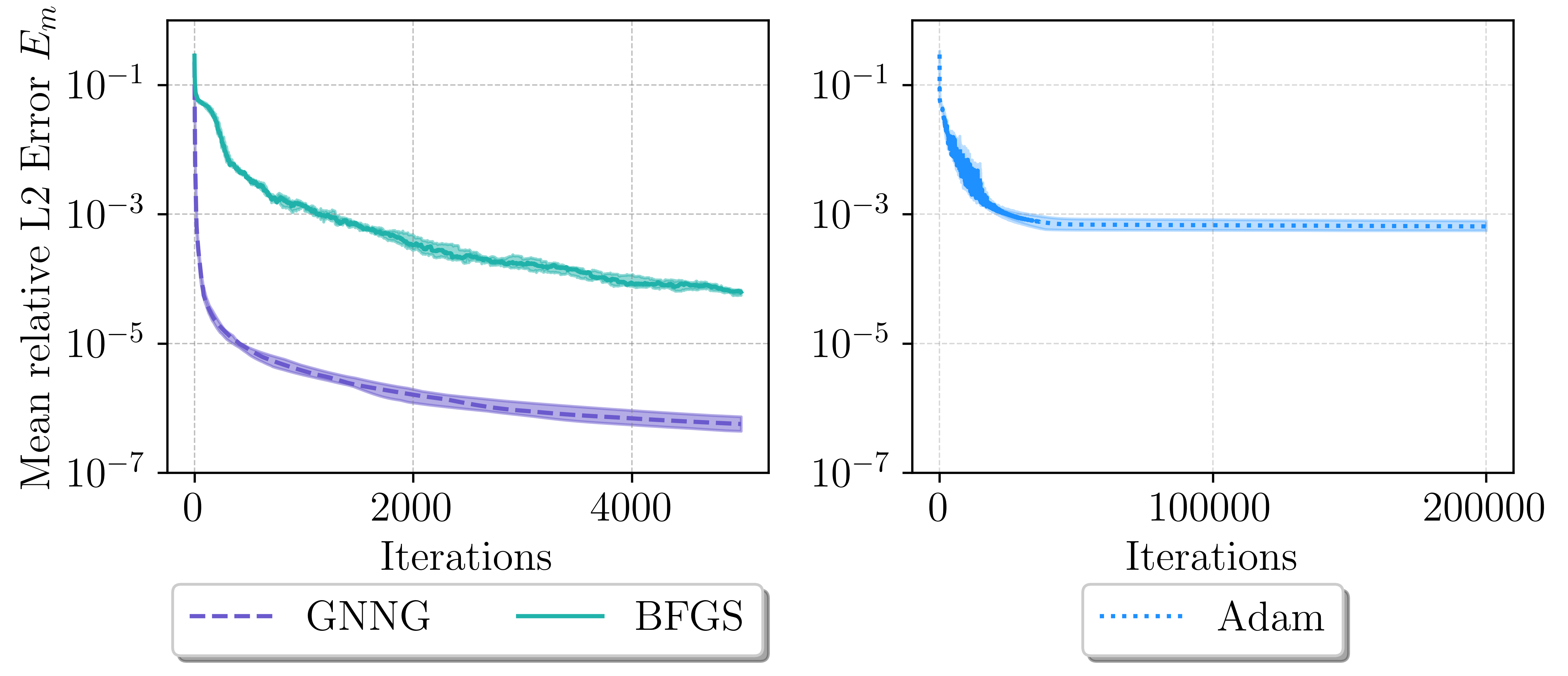}
\caption{Median mean relative \(L^2\) errors \( E_{m} \) during training for the Taylor-Green vortex. Statistics are computed over 10 different initializations with the shaded area displaying the region between the first and third quartile.}
\label{fig:convergence_comparison_mediant}
\end{figure}

    \begin{table}[h!]
    \centering
    \begin{tabular}{|l|c|c|c|}
    \hline
    \textbf{Solver} & \textbf{Min \(E_{m}\) } & \textbf{Max \(E_{m}\) } & \textbf{Median \(E_{m}\) } \\
    \hline
    GNNG & 4.2921e-07 & 8.2343e-07 & 5.6828e-07 \\
BFGS & 4.1266e-05 & 8.7792e-05 & 6.2360e-05 \\
Adam & 4.6008e-04 & 9.4373e-04 & 6.4602e-04 \\
\hline
    \end{tabular}
    \caption{Statistical summary of the mean component-wise relative \(L^2\) errors (\(E_{m}\)) for different solvers in the Taylor-Green vortex experiment evaluated at the final timestep \(t\)=10. The table reports the minimum, maximum, and median of \(E_{m}\) values obtained from 10 initializations for each solver.}
\label{tab:TGmax_error}
    \end{table}

\begin{table}[h!]
\centering
\begin{tabular}{|l|c|c|c|}
\hline
\textbf{Solver} & \textbf{\(u\)} & \textbf{\(v\)} & \textbf{\(p\)}  \\
\hline
    GNNG &  4.1925e-07 & 1.9107e-07 & 6.7731e-07  \\
BFGS & 2.6636e-05 & 3.8090e-05 & 5.9073e-05 \\
Adam & 4.5733e-04 & 4.5708e-04 & 4.6582e-04 \\ 
\hline
\end{tabular}
\caption{Component-wise relative \(L^2\) errors for \(u\), \(v\), and \(p\) in the Taylor-Green vortex experiment for the seed with the lowest overall \(E_{m}\) for each solver evaluated at the final timestep \(t\)=10.}
\label{tab:TGbest_l2_error}
\end{table}

\section*{Acknowledgements}
AJ was supported by a fellowship from Leonardo S.p.A.

\section{Conclusion}
We discretized Gauss-Newton's method in function space in the tangent space of a neural network ansatz and demonstrated its excellent performance on a number of Navier-Stokes benchmark problems. Exploiting its connection to Gauss-Newton in parameter space, we successfully demonstrated scalability to large neural network ansatz functions.

\bibliography{example_paper}
\bibliographystyle{icml2024}

\newpage
\appendix
\onecolumn
\section{Reference Description of the Algorithm}\label{sec:concise_description}
In this Section we provide a concise description of the algorithm, mainly for reference purposes. We include the explicit formula for the matrix $G$ of GNNG, compare to \eqref{eq:generic_second_order}. Here, we use two neural networks $u_\theta$ and $p_\psi$ for the velocity and the pressure respectively with parameters $\theta\in\Theta=\mathbb R^{p_\Theta}$ and $\psi\in\Psi=\mathbb R^{p_{\Psi}}$. By $L$ we denote the loss function for a PINN formulation of the Navier-Stokes equations, see also \eqref{eq:loss_navier_stokes_pinn}. The GNNG method is summarized in Algorithm \ref{algo:alg}.
\begin{algorithm}
    \caption{Gauss-Newton Natural Gradient with Line Search}\label{alg:GNNG}
    \begin{algorithmic}
    \STATE {\bfseries Input:} initial parameters $\theta_0\in \Theta, \psi_0\in\Psi$,  
    $N_{max}$ 
    \FOR{$k=1, \dots, N_{max}$}   
    \STATE Compute $\nabla L(\theta, \psi)\in\mathbb R^{p_\Theta}\times R^{p_\Psi}$ 
    \STATE Assemble $G(\theta, \psi)$ 
    \STATE $\nabla^G L(\theta, \psi) \gets G^\dagger(\theta)\nabla L(\theta, \psi)$ 
    \STATE $\eta^* \gets \arg\min_{\eta\in[0,1]} \,L( (\theta,\psi) - \eta \nabla^G L(\theta, \psi) )$ 
    \STATE $(\theta_k, \psi_k) = (\theta_{k-1}, \psi_{k-1}) - \eta^* \nabla^G L(\theta, \psi)$ 
    \ENDFOR
    \end{algorithmic}
    \label{algo:alg}
\end{algorithm}
The matrix $G(\theta, \psi)$ has block structure
\begin{align*}
    G(\theta, \psi)
    =
    \begin{pmatrix}
        A & B \\
        B^T & C
    \end{pmatrix}.
\end{align*}
For the case of the stationary Navier-Stokes equations, the blocks are given by
\begin{align*}
    A_{ij}
    &=
    (-\nu \Delta \partial_{\theta_j}u_\theta + (\partial_{\theta_j}u_\theta \cdot \nabla)u_\theta + (u_\theta\cdot\nabla)\partial_{\theta_j}u_\theta
    , 
    -\nu \Delta \partial_{\theta_i}u_\theta + (\partial_{\theta_i}u_\theta \cdot \nabla)u_\theta + (u_\theta\cdot\nabla)\partial_{\theta_i}u_\theta )_{L^2(\Omega)}
    \\
    &+
    (\operatorname{div}(\partial_{\theta_j}u_\theta), \operatorname{div}(\partial_{\theta_i}u_\theta))_{L^2(\Omega)}
    +
    (\partial_{\theta_j}u_\theta, \partial_{\theta_i}u_\theta)_{L^2(\Omega)},
    \\
    B_{ij}
    &=
    (-\nu \Delta \partial_{\theta_j}u_\theta + (\partial_{\theta_j}u_\theta \cdot \nabla)u_\theta + (u_\theta\cdot\nabla)\partial_{\theta_j}u_\theta
    ,
    \nabla \partial_{\psi_i}p_\psi)_{L^2(\Omega)}
    \\
    C_{ij}
    &=
    (\nabla \partial_{\psi_j}p_\psi, \nabla \partial_{\psi_i}p_\psi)_{L^2(\Omega)}.
\end{align*}
For numerical stability we will employ a damping of the matrix $G$ by an additive correction of $G$ via a scaled identity, i.e., we use 
\[
    G(\theta,\psi) + \min(10^{-5}, L(\theta,\psi))\operatorname{Id}.
\]
Damping is typical in the natural gradient community \cite{martens2015optimizing, martens2020new} and our specific choice performed well for all experiments.

\section{Proof of the Projection Theorem}\label{sec:appendix_math}
In this Section, we provide the missing details for the proof of Theorem \ref{thm:update_direction}. The analysis will be carried out assuming that the neural network ansatz satisfies boundary, initial, and divergence constraints exactly, compare also to the numerical example \ref{sec:Taylor_Green}.

To complete the proof we need to show that the Riemannian metric
\begin{equation}\label{appendix:metric}
    g(u_k, p_k)((u,p), (v,q))
    =
    (DR(u_k, p_k)(u, p), DR(u_k, p_k)(v, q))_{\mathcal H}
\end{equation}
is positive definite. To this end, we will work in the simplified setting of enforcing the boundary, divergence and initial conditions directly in the neural network ansatz in the functional setting. Furthermore, we will assume that the initial condition $u_0$ and the boundary values $g$ vanish.\footnote{For non-homogeneous initial boundary and initial conditions we can shift the problem.} We introduce now the functional setting where we follow \cite{hinze2000optimal}. Let $I=[0,T]$ be a time interval and let $\Omega\subset\mathbb R^2$ be a domain with $C^2$ boundary. Further, we set $\Omega_T = I\times \Omega$. Consider the spaces 
\begin{align*}
    V 
    &=
    \operatorname{cl}_{H^1(\Omega)}\{ u\in C^\infty_0(\Omega)^2\mid \operatorname{div}(u)=0 \},
    \\
    H^{2,1}(\Omega_T) 
    &=
    L^2(I, H^2(\Omega)\cap V)\cap H^1(I, L^2(\Omega)),
\end{align*}
Here $V$ is the closure with respect to the $H^1(\Omega)$ norm of the smooth functions with compact support and vanishing divergence. The space $H^{2,1}(\Omega_T)$ is the maximal parabolic regularity space. Under the above assumptions, we have that the metric in \eqref{appendix:metric} reduces to the contribution of the momentum equations. To guarantee its definiteness we need to analyze the term
\begin{align*}
    g(u_k,p_k)((u,p), (u,p))
    &=
    \| DR(u_k, p_k)((u,p), (u,p)) \|^2_{L^2(\Omega_T)}
    \\
    &=
    \| \partial_t u - \nu\Delta u + (u_k\cdot \nabla)u + (u\cdot\nabla)u_k + \nabla p \|^2_{L^2(\Omega_T)}.
\end{align*}

\begin{proposition}
    Assume we are in the setting outlined above. Then, for all $u\in H^{2,1}(\Omega_T)$ with $u(0)=0$ and $p\in H^1(\Omega)\cap L^2_0(\Omega)$ it holds
    \begin{equation*}
        \| u \|^2_{H^{2,1}(\Omega_T)} + \|\nabla p\|^2_{L^2(\Omega_T)}
        \lesssim
        \|
        \partial_tu - \nu\Delta u + (u_k\cdot \nabla)u + (u\cdot\nabla)u_k + \nabla p 
        \|^2_{L^2(\Omega_T)}
        \lesssim
        \| u \|^2_{H^{2,1}(\Omega_T)} + \|\nabla p\|^2_{L^2(\Omega_T)}.
    \end{equation*}
    In particular, $g(u_k,p_k)$ is positive definite.
\end{proposition}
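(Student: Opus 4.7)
The plan is to prove the two-sided bound by handling each direction separately: the upper bound is elementary, while the lower bound amounts to maximal parabolic regularity for the Stokes operator subject to an Oseen-type perturbation. For the upper bound, I would apply the triangle inequality to the five terms of the residual. The derivatives $\partial_t u$ and $\nu\Delta u$ are immediately controlled by $\|u\|_{H^{2,1}(\Omega_T)}$, and $\|\nabla p\|_{L^2(\Omega_T)}$ appears explicitly on the right-hand side. For the convective terms, I would invoke the two-dimensional embedding $H^{2,1}(\Omega_T)\hookrightarrow L^\infty(\Omega_T)$ and H\"older's inequality to obtain
\[
    \|(u_k\cdot\nabla)u\|_{L^2(\Omega_T)} + \|(u\cdot\nabla)u_k\|_{L^2(\Omega_T)} \lesssim \|u_k\|_{H^{2,1}(\Omega_T)}\,\|u\|_{H^{2,1}(\Omega_T)},
\]
with implicit constant depending on the fixed iterate $u_k$.

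For the lower bound, the strategy is to rearrange the residual $f := \partial_t u - \nu\Delta u + (u_k\cdot\nabla)u + (u\cdot\nabla)u_k + \nabla p$ as $\partial_t u - \nu\Delta u + \nabla p = f - (u_k\cdot\nabla)u - (u\cdot\nabla)u_k$, so that $(u,p)$ solves an inhomogeneous Stokes system with $\operatorname{div}(u)=0$, vanishing boundary and vanishing initial data. Because $\partial\Omega$ is $C^2$, I would invoke maximal parabolic $L^2$-regularity for the Stokes operator (following the references in \cite{hinze2000optimal}) to conclude
\[
    \|u\|^2_{H^{2,1}(\Omega_T)} + \|\nabla p\|^2_{L^2(\Omega_T)} \lesssim \|f\|^2_{L^2(\Omega_T)} + \|(u_k\cdot\nabla)u\|^2_{L^2(\Omega_T)} + \|(u\cdot\nabla)u_k\|^2_{L^2(\Omega_T)}.
\]
Both convective terms can then be bounded by $C\|\nabla u\|^2_{L^2(\Omega_T)}$ using $u_k\in L^\infty(\Omega_T)$ and the embedding $H^2(\Omega)\hookrightarrow L^\infty(\Omega)$. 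An interpolation inequality of the form $\|\nabla u\|_{L^2}^2 \lesssim \|u\|_{L^2}\,\|u\|_{H^{2,1}}$ combined with Young's inequality absorbs the $H^{2,1}$-contribution into the left-hand side, leaving only a term proportional to $\|u\|^2_{L^2(\Omega_T)}$. Restricting the estimate to sub-intervals $[0,t]$ and applying Gronwall's integral inequality, together with the vanishing initial condition $u(0)=0$, eliminates this residual $L^2$-norm and completes the argument.

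The principal obstacle is the correct invocation of maximal parabolic regularity for the Stokes operator in the $H^{2,1}$-framework on a domain with $C^2$ boundary, together with the identification of the pressure space $L^2_0(\Omega)$ so that $\nabla p$ is uniquely determined. This is a delicate but classical ingredient from the analysis of the linearized Navier-Stokes equations. Once it is granted, the Oseen perturbation is of strictly lower differential order than the Stokes operator, and the interpolation-plus-Gronwall absorption is a standard device.
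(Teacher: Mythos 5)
Your argument is correct, but it is more self-contained than the proof in the paper, which simply cites Proposition 2.1(vi) of \cite{hinze2000optimal} for the lower (coercivity) estimate, recovers the pressure a posteriori by extending the test functions in the variational formulation from the divergence-free space $V$ to all of $H^1_0(\Omega)$ (a de Rham-type argument), and dismisses the upper estimate as a straightforward computation. You instead reprove the coercivity from scratch: you treat the convective terms as a lower-order perturbation of the Stokes system, invoke maximal parabolic $L^2$-regularity for Stokes (which delivers the pressure bound directly, so no separate de Rham step is needed), and close the argument with interpolation, Young's inequality, and Gronwall on subintervals. This is essentially the content of the cited result, so the two routes are mathematically the same perturbation argument at different levels of explicitness; yours buys a self-contained proof and an integrated pressure estimate, while the paper's buys brevity. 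Two small points to tighten: the embedding $H^{2,1}(\Omega_T)\hookrightarrow L^\infty(\Omega_T)$ is borderline (critical) for $n=2$ and fails in general, but this is harmless here because the fixed iterate $u_k$ is a smooth (neural network) function, so you can bound the convective terms by $\|u_k\|_{L^\infty}\|\nabla u\|_{L^2}$ and $\|\nabla u_k\|_{L^\infty}\|u\|_{L^2}$ directly, or use an $L^4$--$L^4$ H\"older pairing; and in the Gronwall step you should make explicit that $\|u(t)\|_{L^2(\Omega)}^2\lesssim \|u\|^2_{H^{2,1}(\Omega_t)}$ via the embedding $H^{2,1}(\Omega_t)\hookrightarrow C([0,t];L^2(\Omega))$ together with $u(0)=0$, so that the restricted estimates really do close into an integral inequality for a single scalar quantity.
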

\begin{proof}
    This lower estimate follows essentially from Proposition 2.1 item vi. in \cite{hinze2000optimal}. To recover the pressure that is absent in this reference, we note that we can recover it extending the test functions in the variational formulation of $DR(u_k,p_k)$ from $V$ to all of $H^1_0(\Omega)$. The remaining estimate is a straight-forward computation.
\end{proof}

\section{Visual Comparison of the Update Directions}\label{sec:visual}
As commented earlier, Newton's method in function space, proposed as ENGD in \cite{muller2023achieving} does not yield satisfactory results for the Navier-Stokes equations when applied early in the training process. To illustrate this fact we compare the update directions for the $u$ component of the velocity in the Kovasznay flow example of Section \ref{sec:kovasznay} for different iterations in the training process. We note that in the beginning of the training process, see Figure \ref{fig:kovasznay_pushs_extended20}, the update direction proposed by ENGD does not match the error -- which is the optimal update direction -- closely. This is in contrast to the update of the Gauss-Newton method which visually yields an almost perfect fit. Later in the training process, in this example after 70 iterations, when the loss function value has decreased to around $2\mathrm{e}{-4}$, the update directions of Gauss-Newton and ENGD agree, compare to Figure \ref{fig:kovasznay_pushs_extended70}.

\begin{figure}[h]
    \centering
    \includegraphics[width=0.8\linewidth]{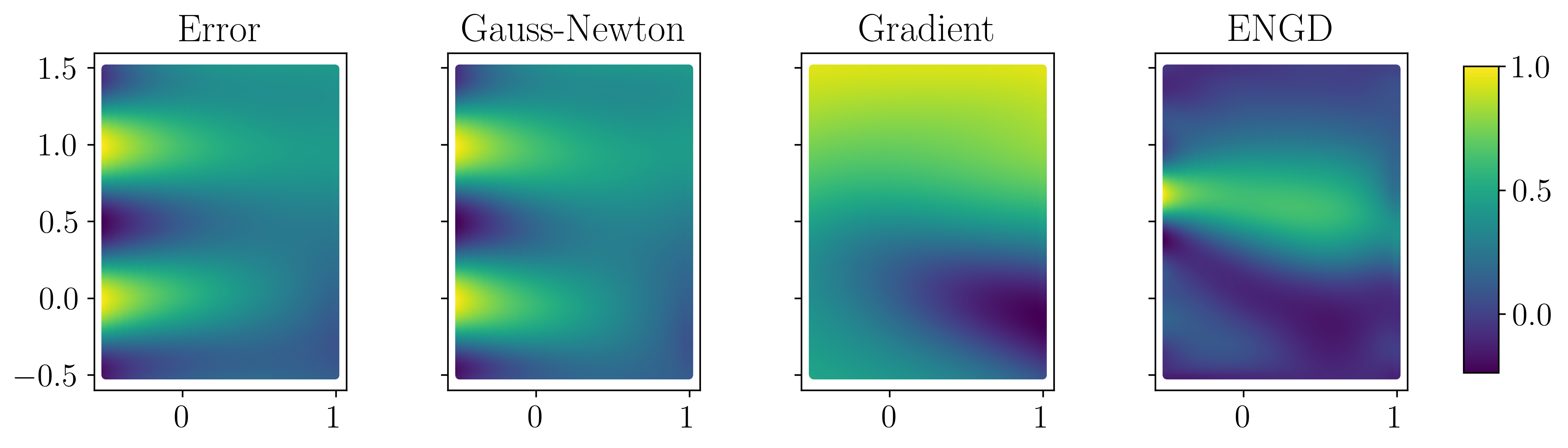}
    \caption{Visualization of the update directions of different optimizers in the example of the Kovasznay flow at the 20th iteration of a Gauss-Newton solve. Note that the error is the optimal update direction. The first component of the velocity is shown and all plots are normed to lie in $[-1,1]$.}
    \label{fig:kovasznay_pushs_extended20}
\end{figure}

\begin{figure}[h]
    \centering
    \includegraphics[width=0.8\linewidth]{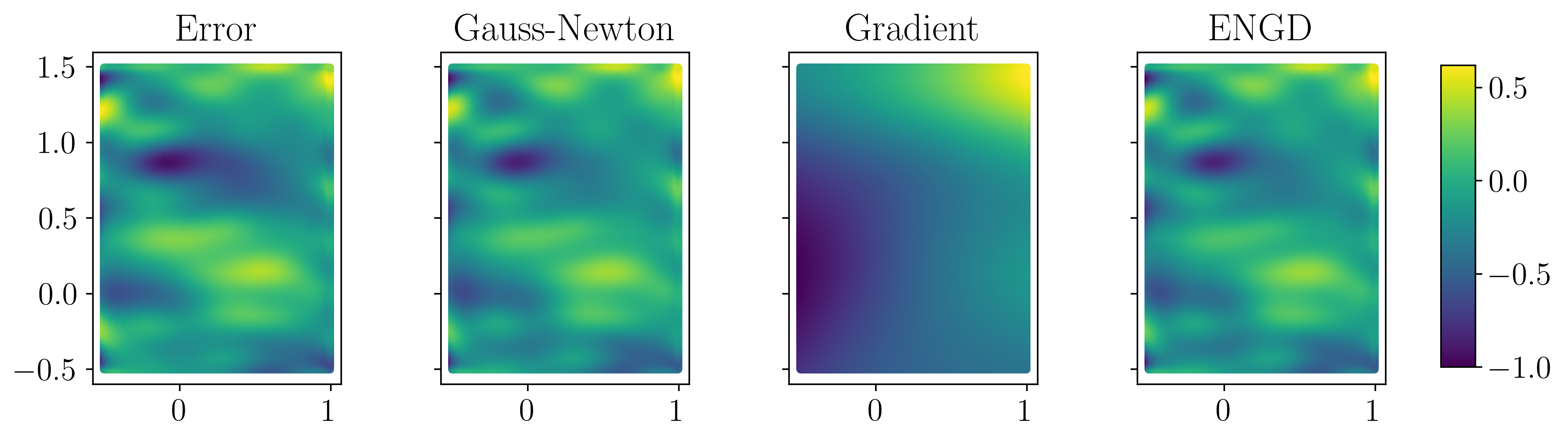}
    \caption{Visualization of the update directions of different optimizers in the example of the Kovasznay flow at the 70th iteration of a Gauss-Newton solve. Note that the error is the optimal update direction. The first component of the velocity is shown and all plots are normed to lie in $[-1,1]$.}
    \label{fig:kovasznay_pushs_extended70}
\end{figure}

\newpage

\section{Matrix-Free Taylor-Green}\label{sec:MFTG}
We revisit the Taylor-Green vortex as presented in Section~\ref{sec:Taylor_Green} with a modified setup to demonstrate the computational feasibility of employing large-networks using the matrix-free formulation described in \ref{sec:matrix_free}. The neural network architecture is expanded to 10 layers of width 100, which roughly corresponds to 91k parameters. Solving the linear system using the direct method would require storing a Gramian matrix of size 62.79 gigabytes per collocation point. BFGS, which stores a dense $n\times n$  approximation of the inverse Hessian, where $n$ is the number of parameters of the network, is not usable in this case and we instead benchmark against limited-memory BFGS (L-BFGS), see for instance \cite{Liu_Nocedal_1989}. The matrix-free system is solved using the conjugate gradient method, with tolerance $10^{-5}$. Reducing the tolerance leads to increased accuracy at the price of increased computational time.

Furthermore, unlike the experiment in Section~\ref{sec:Taylor_Green} we impose the constrains in a soft-manner, maintaining only the periodic boundary conditions as a hard constraint. We refer to  Table~\ref{tab:optimization_settingsTG} for the optimization settings for this problem. We conduct the experiment for 5 different initializations. The rest of the setup remains unchanged.

As reported in Tables \ref{tab:TGEMF} and \ref{tab:TGbest_l2_error}, and illustrated in Figure~\ref{fig:convergence_comparison_mediantTGMF}, and similarly to the Beltrami flow experiment described in Section~\ref{sec:beltrami}, Adam struggles to reach satisfying accuracy with soft constraints, and only achieves \(L^2\) errors of the order of $10^{-1}$. We also note the loss in accuracy comparing L-BFGS to BFGS, reaching errors in the order of $10^{-2}$. More notably, despite being ran for substantially less iterations, GNNG in the matrix-form is able to produce highly accurate solutions. We achieve relative \(L^2\) errors of the order  of $10^{-5}$. It is also important to highlight that in this case, the accuracy of GNNG is primarily constrained by the tolerance level set for the conjugate gradient.

\begin{table}[h]
    \centering
    \begin{tabular}{|l|c|r|}
    \hline
    \textbf{Method} & \textbf{Number of Iterations}  \\
    \hline
    GNNG & 1000  \\
    L-BFGS & 5000  \\
    Adam & 100000 \\
    \hline
    \end{tabular}
    \caption{Optimization settings for the different solvers used in the experiments.}
    \label{tab:optimization_settingsTG}
\end{table}

\begin{figure}[H]
    \centering
    \includegraphics[width=0.48\textwidth]{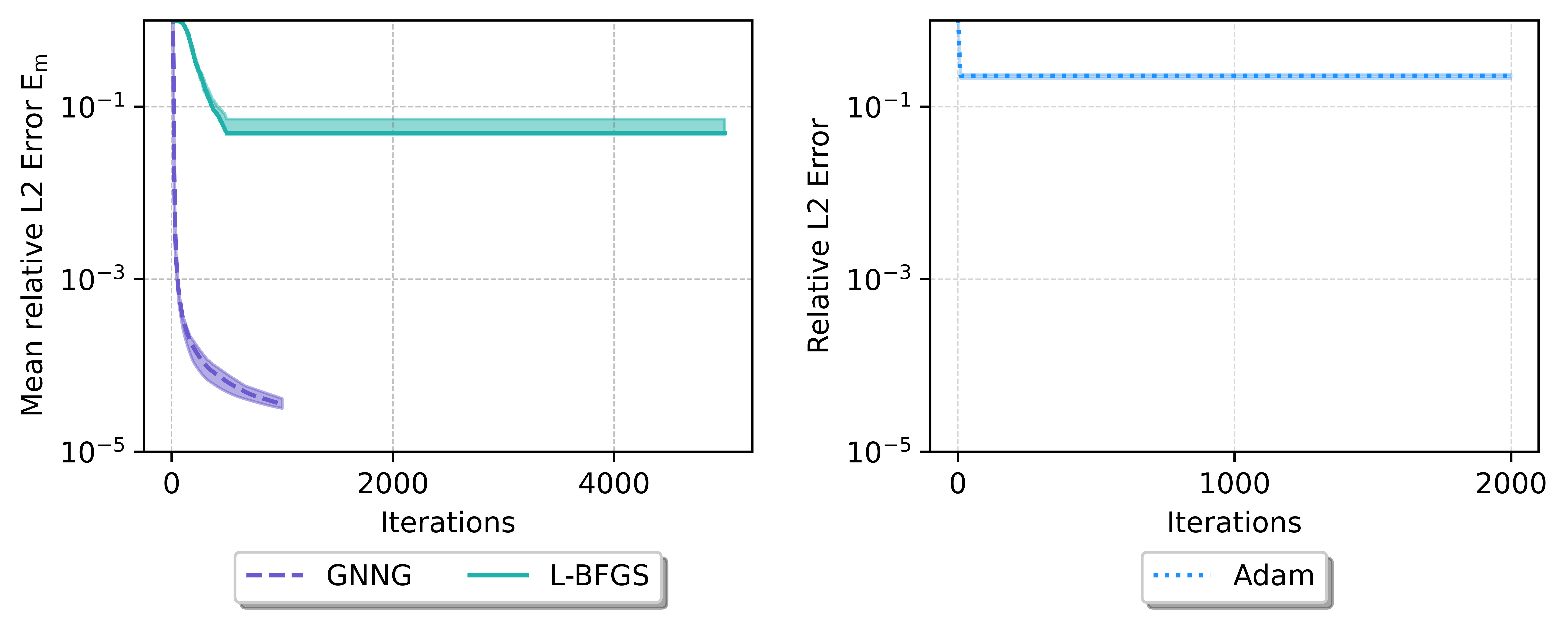}
    \caption{Median mean relative \(L^2\) errors \( E_{m} \) throughout the training process for the Matrix-free Taylor-Green vortex with soft boundary conditions over 5 different initializations. The shaded area displays the region between the first and third quartile.}
    \label{fig:convergence_comparison_mediantTGMF}
\end{figure}

\begin{table}[H]
    \centering
    \begin{tabular}{|l|c|c|c|}
    \hline
    \textbf{Solver} & \textbf{Min \(E_{m}\) } & \textbf{Max \(E_{m}\) } & \textbf{Median \(E_{m}\) } \\
    \hline
    GNNG & 1.8961e-05 & 4.5460e-05 & 3.5657e-05 \\
    L-BFGS &  3.3758e-02 & 7.5131e-02 & 4.9647e-02 \\
    Adam & 1.9082e-01 & 2.7766e-01 & 2.2857e-01 \\
    \hline
    \end{tabular}
    \caption{Statistical summary of the mean component-wise relative \(L^2\) errors (\(E_{m}\)) in the matrix-free Taylor Green vortex experiment. This table presents the minimum, maximum, and median values of \(E_{m}\) across 5 initializations for each solver.}
    \label{tab:TGEMF}
\end{table}

\begin{table}[h!]
    \centering
    \begin{tabular}{|l|c|c|c|}
    \hline
    \textbf{Solver} & \textbf{\(u\)} & \textbf{\(v\)} & \textbf{\(p\)}  \\
    \hline
    GNNG &  1.8827e-05 & 1.6894e-05 & 2.1160e-05  \\
    L-BFGS & 2.8716e-02 & 2.8652e-02 & 4.3906e-02 \\
    Adam & 1.5338e-01 & 1.4830e-01 & 2.7078e-01 \\ 
    \hline
    \end{tabular}
    \caption{Component-wise relative \(L^2\) errors for \(u\), \(v\), and \(p\) in the Matrix Free Taylor-Green vortex experiment for the seed with the lowest overall \(E_{m}\) for each solver evaluated at the final timestep \(t\)=10.}
\end{table}

\section{ Additional Resources for the Experiments}\label{sec:Additional}

\subsection{Loss Functions}\label{sec:Losses}
We present the loss functions for the experiments detailed in Section~\ref{sec:Experiments}, Each plot depicts the median loss over 10 iterations, with the shaded area representing the interquartile range.

\begin{figure}[H]
\centering
\includegraphics[width=0.48\textwidth]{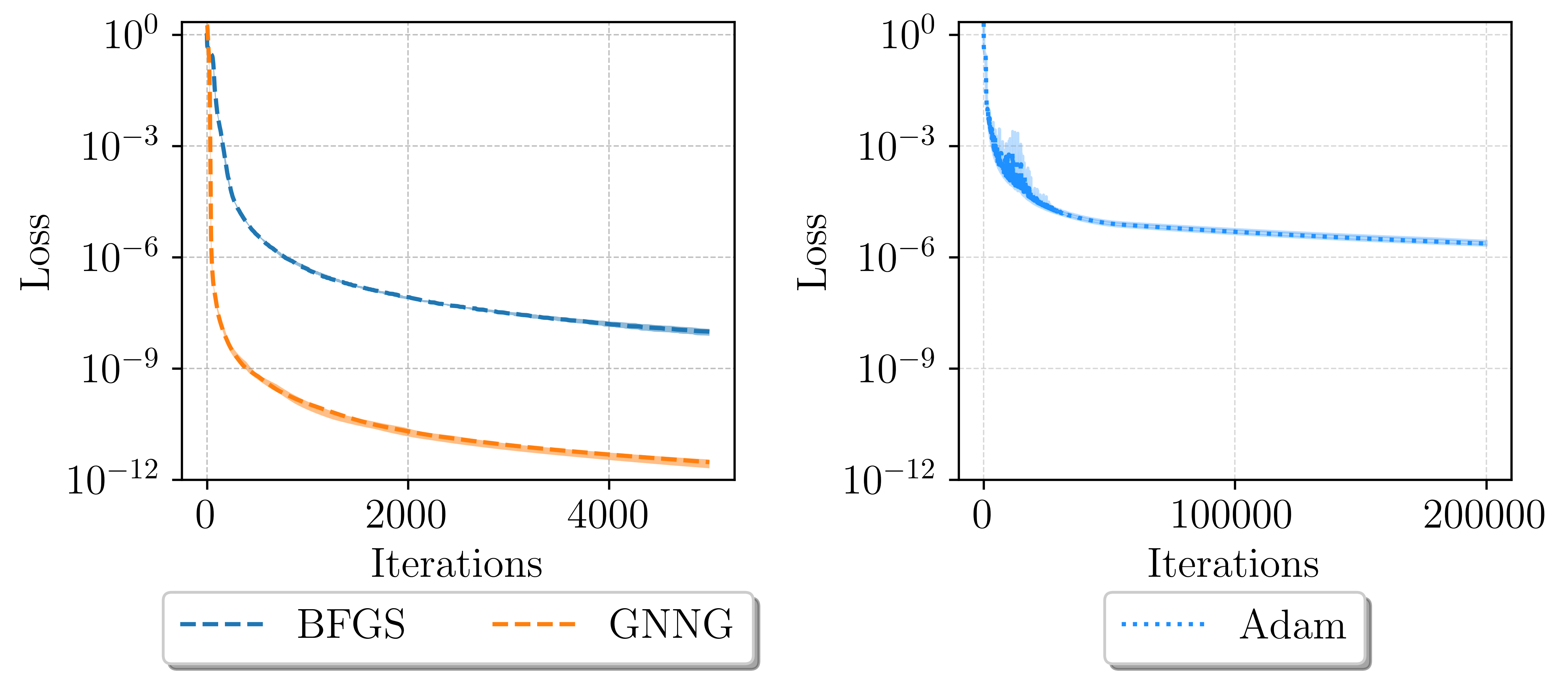}
\caption{Median loss function value during the optimization process for the Kovasznay flow experiment. The statistics are computed over 10 different initializations with the shaded area displaying the region between the first and third quartile.}
\label{fig:kovasznay_loss}
\end{figure}

\begin{figure}[H]
\centering
\includegraphics[width=0.48\textwidth]{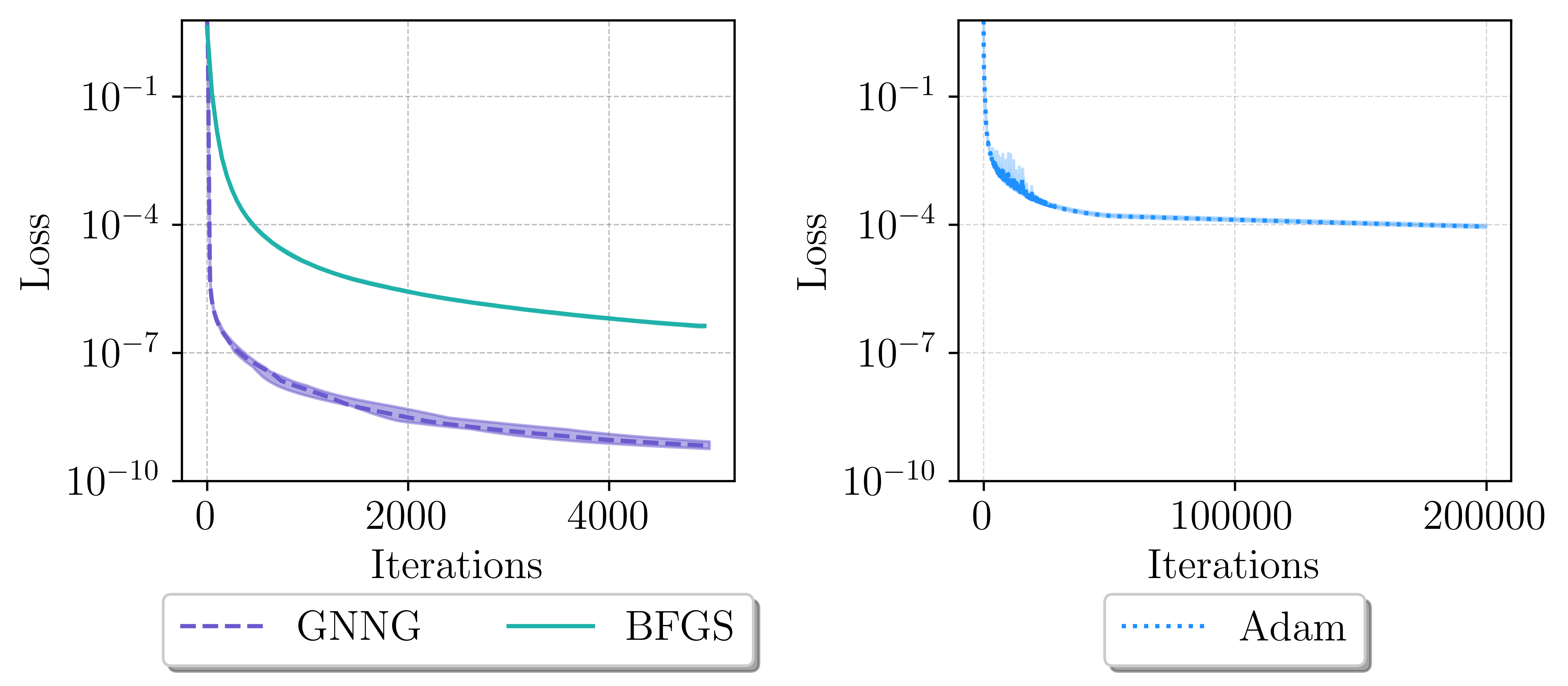}
\caption{Median loss function value during the optimization process for the Beltrami flow experiment. The statistics are computed over 10 different initializations with the shaded area displaying the region between the first and third quartile.}
\label{fig:los_bs}
\end{figure}

\begin{figure}[H]
\centering
\includegraphics[width=0.48\textwidth]{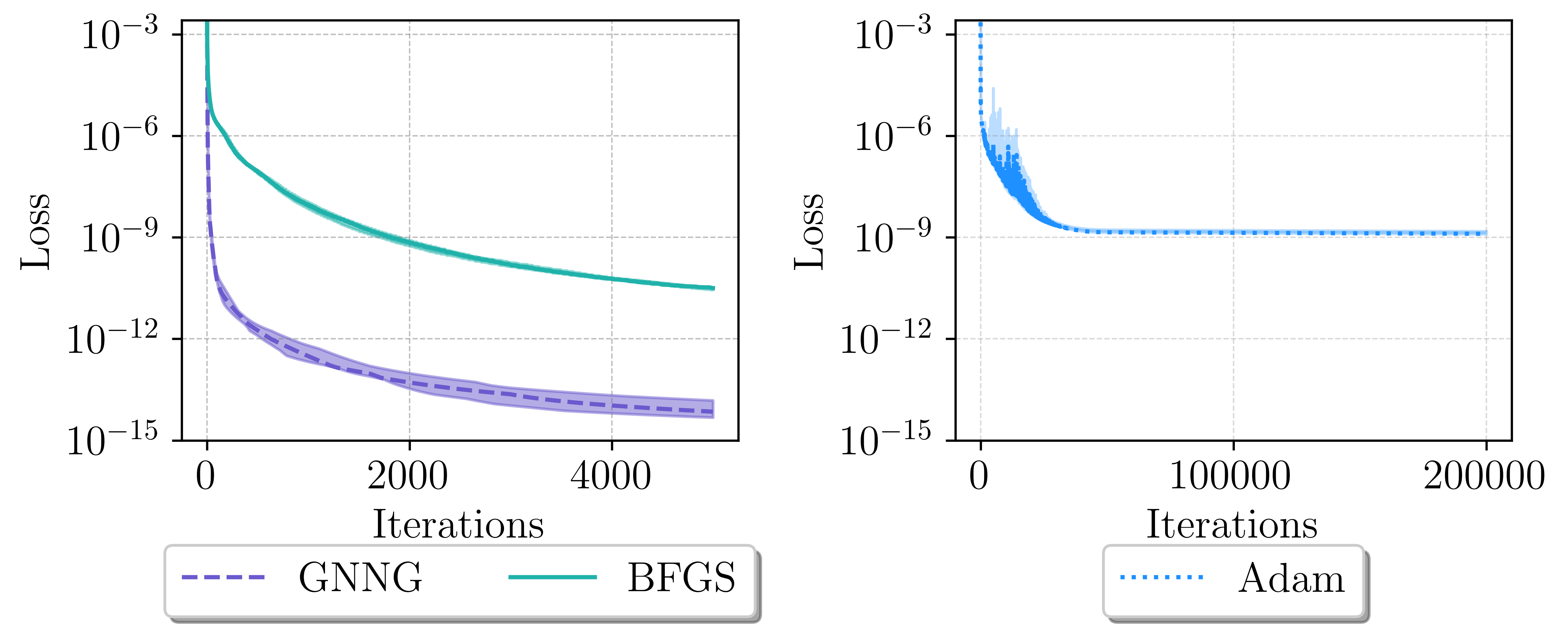}
\caption{Median loss function value during the optimization process for the Taylor-Green vortex with hard-imposed boundary conditions. The statistics are computed over 10 different initializations with the shaded area displaying the region between the first and third quartile.}
\label{fig:taylor_green_loss}
\end{figure}

\begin{figure}[H]
\centering
\includegraphics[width=0.48\textwidth]{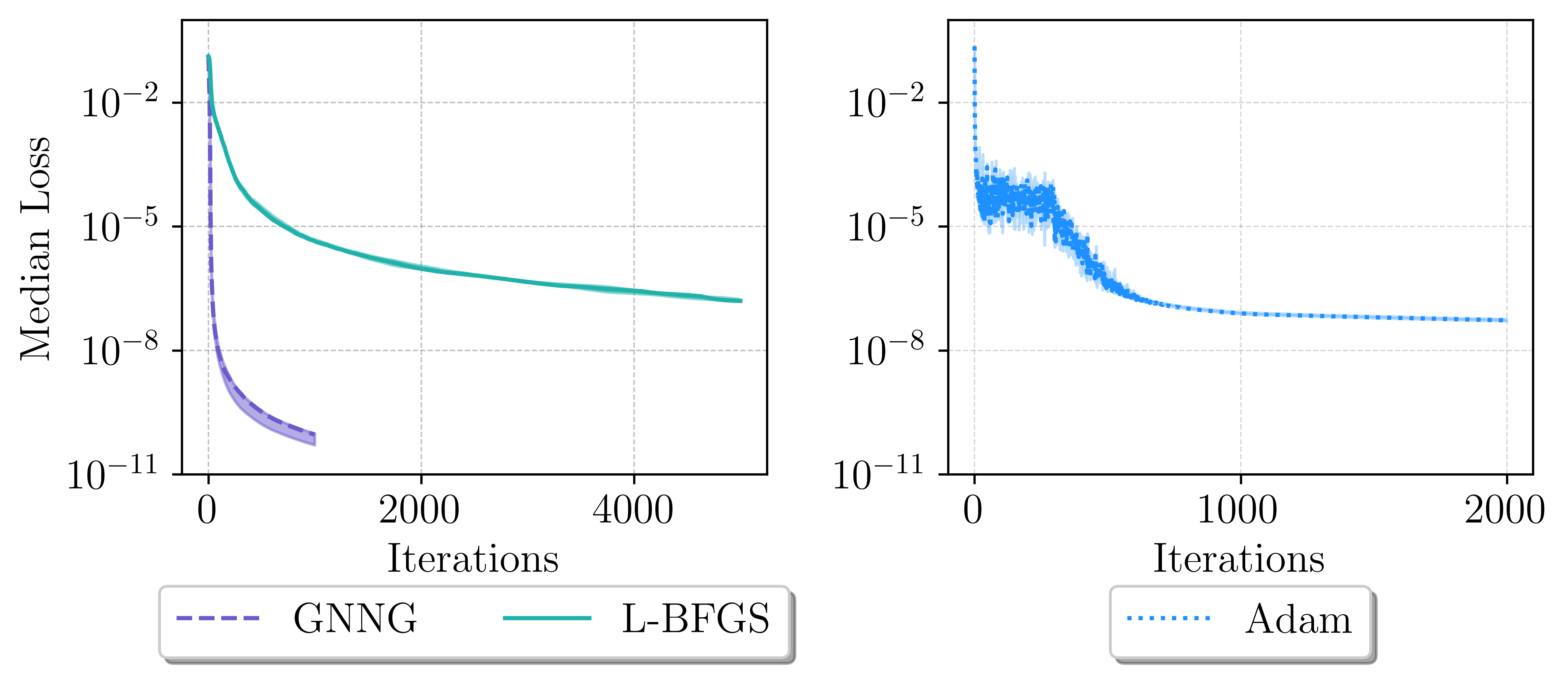}
\caption{Median loss function value during the optimization process for the matrix-free Taylor-Green vortex with soft boundary conditions. The statistics are computed over 5 different initializations with the shaded area displaying the region between the first and third quartile.}
\label{fig:loss_cmg}
\end{figure}

\end{document}